\newtheorem{thm}{Theorem}
\newtheorem{prop}[thm]{Proposition}
\theoremstyle{definition}
\newtheorem{defi}[thm]{Definition}
\newtheorem{prob}[thm]{Problem}
\newtheorem{qu}[thm]{Question}
\newtheorem{const}[thm]{Construction}
\theoremstyle{definition}
\newtheorem{app}[thm]{Application}
\renewcommand{\H}{\mathbb{H}}
\newcommand{\R}{\mathbb{R}}
\newcommand{\Z}{\mathbb{Z}}
\newcommand{\N}{\mathbb{N}}
\newcommand{\Q}{\mathbb{Q}}
\newcommand{\C}{\mathbb{C}}
\newcommand{\D}{\mathbb{D}}
\newcommand{\T}{\mathbb{T}}
\newcommand{\ra}{\rightarrow}
\newcommand{\Si}{\Sigma}
\newcommand{\de}{\delta}
\newcommand{\Ga}{\Gamma}
\newcommand{\til}{\tilde}
\newcommand{\al}{\alpha}
\newcommand{\pa}{\partial}
\newcommand{\xra}{\xrightarrow}
\newcommand{\hra}{\hookrightarrow}
\newcommand{\ca}{\mathcal}
\newcommand{\sbs}{\subset}
\newcommand{\ld}{\ldots}
\newcommand{\ti}{\times}
\newcommand{\Br}{\mathrm{Br}}
\DeclareMathOperator{\id}{id}
\DeclareMathOperator{\fix}{fix}
\DeclareMathOperator{\PSL}{PSL}
\DeclareMathOperator{\SL}{SL}
\DeclareMathOperator{\SO}{SO}
\DeclareMathOperator{\GL}{GL}
\DeclareMathOperator{\Homeo}{Homeo}
\DeclareMathOperator{\Diff}{Diff}
\DeclareMathOperator{\Mod}{Mod}
\DeclareMathOperator{\Conf}{Conf}
\DeclareMathOperator{\QC}{QC}
\DeclareMathOperator{\QS}{QS}
\DeclareMathOperator{\Aff}{Aff}
\DeclareMathOperator{\Aut}{Aut}
\DeclareMathOperator{\Symp}{Symp}
\title{Realization problems for diffeomorphism groups} 
\author{Kathryn Mann and Bena Tshishiku}
\date{}
\begin{document}

\maketitle

\begin{abstract}
We discuss recent results and open questions on the broad theme of (Nielsen) realization problems.  Beyond realizing subgroups of mapping class groups, there are many other natural instances where one can ask if a surjection from a group of diffeomorphisms of a manifold to another group admits a section over particular subgroups.   This survey includes many open problems, and some short proofs of new results that are illustrative of key techniques; drawing attention to parallels between problems arising in different areas.
\end{abstract} 

\numberwithin{thm}{section}
\numberwithin{equation}{section}

\setcounter{section}{0}

\section{Introduction}

One way to understand the algebraic structure of a large or complicated group is to surject it to a simpler group, then measure what is lost.   In other words, given $G$, we take an exact sequence $1\ra K \to G \to H\ra 1$ and study how much $H$ differs from $G$.  This difference can be measured through the obstructions to a (group-theoretic) \emph{section} $\phi: H \to G$, as well as the obstructions to sections over subgroups of $H$. 

In the case where $G$ is the group of homeomorphisms or diffeomorphisms of a manifold, this question has a long history and important interpretations, both from the topological point of view of flat bundles, and the dynamical point of view through group actions of $M$.  
This article is intended as an invitation to section problems for diffeomorphism groups; illustrating applicable techniques and listing open problems.
To introduce this circle of ideas, we begin with the classical case of surface bundles and diffeomorphism groups of surfaces.  
  
\paragraph{The flatness problem for surface bundles.}
Let $\Si$ be a smooth, oriented closed surface and let $E\ra B$ be a fiber bundle with fiber $\Si$ and structure group the group $\Diff^r(\Si)$ of orientation-preserving $C^r$ diffeomorphisms of $\Si$, for some fixed $r \geq 0$.  Such a bundle is said to be \emph{flat} or \emph{foliated} of class $C^r$ if it admits a $C^r$ foliation transverse to the fibers whose leaves project to the base as covering spaces.  Equivalently, $E\ra B$ is flat if its structure group can be taken to be $\Diff^r(\Si)$ with the discrete topology. In a flat bundle, parallel transport along the leaves of the foliation defines a holonomy representation $\phi:\pi_1(B)\ra\Diff^r(\Si)$. 

It is a basic problem to determine when a surface bundle is flat; and even in the case where the base is a surface, it is a open question whether every surface bundle admits a flat structure.  A foundational result of Earle--Eells \cite{EE} states that, when $\chi(\Si)<0$, a $\Si$-bundle $E\ra B$ is determined up to bundle isomorphism by its \emph{monodromy representation} $\rho:\pi_1(B)\ra\Mod(\Si)$.  Here $\Mod(\Si) := \pi_0(\Diff^r(\Si))$ denotes the {\em mapping class group} of $\Si$ (which is independent of $r$).  Thus, $E\ra B$ admits a $C^r$ flat structure if and only if its monodromy lifts to $\Diff^r(\Si)$, as in the diagram below.  In this case, the lift agrees (up to conjugacy) with the holonomy $\phi$ mentioned above.
\[\begin{xy}
(0,12)*+{\Diff^r(\Si)}="A";
(-20,0)*+{\pi_1(B)}="B";
(0,0)*+{\Mod(\Si)}="C";
{\ar@{-->}"B";"A"}?*!/_3mm/{\til\rho};
{\ar "A";"C"}?*!/_3mm/{\pi};
{\ar "B";"C"}?*!/^3mm/{\rho};
\end{xy}\]

The flat bundle question motivates the following problem. Let $\Si_{g,m}^b$ denote a compact surface of genus $g$ with $b$ boundary components and $m$ marked points. Define $\Diff^r(\Si_{g,m}^b)$ as the group of orientation-preserving diffeomorphisms of $\Si$ that preserve the marked points as a set, and define $\Diff^r_\pa(\Si_{g,m}^b)<\Diff^r(\Si_{g,m}^b)$ as the subgroup of diffeomorphisms that restrict to the identity on the boundary $\pa\Si$.  The mapping class group $\Mod_{g,m}^b:=\pi_0 \Diff^r_\pa(\Si_{g,m}^b)$ can be viewed as the group of diffeomorphism of $\Si_{g,m}^b$ modulo isotopies that fix the boundary and the punctures\footnote{following standard convention, we let $\Mod_g$ denote $\Mod_{g, 0}^0$}. 

\begin{prob}[Generalized Nielsen realization]\label{prob:lifting-mod}
Let $\Gamma$ be a finitely-generated group, $0 \leq r \leq \infty$, and let $\rho:\Gamma\ra\Mod_{g,m}^b$ be a homomorphism. Does there exist a homomorphism $\tilde\rho:\Gamma\rightarrow\Diff^r_\pa(\Si_{g,m}^b)$ so that $\pi\circ\til\rho=\rho$?
\end{prob}

If the answer is yes, we say $\rho$ is \emph{realized by $C^r$ diffeomorphisms}, or by \emph{homeomorphisms} in the case $r=0$.  (We will use both $\Diff^0(M)$ and $\Homeo(M)$ as notation for the homeomorphism group of a manifold $M$.)  The map $\tilde{\rho}$ is called a \emph{section} of $\Diff^r(\Si_{g,m}^b) \to \Mod(\Si_{g,m}^b)$ over $\rho$, so we will also often refer to this and related questions as \emph{section problems}.    Earle--Eells' result generalizes to surfaces with boundary and marked points \cite{ES,hatcher-mw}, and from the bundle perspective, marked points correspond to preferred sections.   

As is well known, Problem \ref{prob:lifting-mod} was answered positively for finite, cyclic groups by Nielsen. Nielsen's original motivation was different from ours -- the question he answered (positively) was: \emph{if $f$ is a diffeomorphism of $\Si$ such that $f^n$ is isotopic to the identity, is $f$ isotopic to some $g \in \Diff^\infty(\Si)$ with $g^n = \id$?} Problem \ref{prob:lifting-mod} for finite groups $\Gamma \subset \Mod_g$ eventually became known as Nielsen's problem.   Fenchel \cite{Fenchel} gave a positive answer for finite {\em solvable} groups, and the general result for finite groups is due to Kerckhoff \cite{Kerckhoff}, with alternate proofs by Tromba, Gabai and Wolpert  \cite{Tromba, Gabai, Wolpert} appearing later.  Going forward in this paper we will 
focus on realization problems for infinite groups.  

The case $\Gamma = \Mod_g$ for $\tilde{\rho}$ with image in $\Homeo(\Sigma_g)$ was posed by Thurston in Kirby's problem list \cite[Prob.\ 2.6]{Kirby-problems}.  This was resolved (negatively) in case of $C^2$ diffeomorphisms by Morita \cite{Morita} for $g \geq 18$, in the $C^1$ case by Franks--Handel \cite{FH} for $g \geq 3$ -- whose proofs we sketch later on -- and then by Markovic \cite{Markovic} and Markovic--Saric \cite{MS} for homeomorphisms and $g \geq 2$. Although Morita's approach generalizes to finite-index subgroups, and Franks--Handel cover several other special cases, for most infinite subgroups $\Gamma<\Mod_{g,m}^b$ Problem \ref{prob:lifting-mod} remains completely open (see discussion in \cite[\S 6.3]{Farb06}).  

One source of difficulty in Problem \ref{prob:lifting-mod} is our poor understanding of finitely-generated subgroups of diffeomorphism groups.  For instance, the following problem is open. 

\begin{prob} \label{prob:subgroup_homeo}
Give an example of a finitely-generated, torsion free group $\Gamma$, and a surface $S$, such that $\Gamma$ is not isomorphic to a subgroup of $\Homeo(S)$.   
\end{prob}

Of course, it suffices to give an example for $\Homeo_\pa(\D^2)$ as this embeds in $\Homeo(S)$ -- and in fact in the identity component $\Homeo_0(S)$ -- for any other surface.   Indeed, the difficulty in the problem is understanding the algebraic structure of $\Homeo_0(S)$; replacing $\Homeo(S)$ by $\Mod(S)$ renders it a reasonable exercise.  
Problem \ref{prob:subgroup_homeo} is also completely open for homeomorphism groups of manifolds of higher dimension. In general the regularity of diffeomorphisms can make a great difference, and Problem \ref{prob:subgroup_homeo} has been recently solved for $C^2$ diffeomorphisms with the solution to the Zimmer conjecture \cite{BFH}, with examples given by higher rank lattices.   However, we are far from a complete understanding of the algebraic structure of groups that can and cannot act.  

\paragraph{Nielsen realization problems in higher dimensions.} 
Problem \ref{prob:lifting-mod} can be posed for manifolds $M$ with $\dim M\ge3$. Does $\Diff^r(M) \to \pi_0\big(\Diff^r(M)\big)$ admit a section? Which finitely-generated subgroups can be realized?  Unlike in the surface case, the group $\pi_0(\Diff^r(M))$ is not always finitely generated and its algebraic structure may depend on $r$. Spheres are a particularly interesting class of examples:

\begin{qu}[A.\ Kupers]
For $n \geq 5$, the group $\pi_0(\Diff^\infty(S^n))$ is the group of homotopy $(n+1)$-spheres, a finite abelian group.  Can this group be realized by diffeomorphisms of $S^n$?  
\end{qu}
Some cases are covered by Schultz \cite{schultz}. Other instances of Nielsen realization problems for finite subgroups of mapping class groups are discussed by Block--Weinberger in \cite{BW}, who prove several non-realizability results and give a good survey of the state of the art.   Examples of non-realizability results for {\em infinite}, finitely-generated subgroups for higher-dimensional manifolds are given in \cite{tshishiku-nielsen}; see Theorem \ref{thm:tshishiku-nielsen} below.


\paragraph{Beyond mapping class groups.}  

When $M$ is a closed manifold, the path-component of the identity $\Diff^r_0(M)\subset\Diff^r(M)$ is simple, provided that $r \neq \dim(M) +1$. For $r>0$, this is a deep result of Mather \cite{Mather1, Mather2}, and for $r=0$ it follows from \cite{Anderson} and \cite{EK} (it is open for $r=\dim(M)+1$). Thus, realization problems for subgroups of mapping class groups are essentially the only section problems for these groups.  However, when $M$ has boundary, there is an obvious map $\Diff^r(M) \overset{R}\to \Diff^r(\pa M)$.  This may not be surjective, but is when restricted to the identity components $\Diff^r_0(M) \to \Diff^r_0(\pa M)$. In \cite{ghys91}, Ghys asked 
in which cases this map admits a section.
In parallel with Problem \ref{prob:lifting-mod}, we ask: 

\begin{prob} \label{prob:boundary}
Let $\Gamma$ be a finitely-generated group, $0 \leq r \leq \infty$, and let $\rho:\Gamma\ra \Diff^r_0(\pa M)$ be a homomorphism. In which cases does there exist a homomorphism $\tilde\rho:\Gamma\rightarrow\Diff^r(M)$ so that $R \circ\til\rho=\rho$?
\end{prob}

Rephrased in terms of flat bundles, Problem \ref{prob:boundary} asks whether a flat $(\pa M)$-bundle $E$ is the boundary of a flat $M$-bundle.

As a further variant on Problems \ref{prob:lifting-mod} and \ref{prob:boundary}, one can also restrict to \emph{subgroups} of $\Diff^r(M)$ and pose the same family of questions. For real-analytic diffeomorphisms, 
Problem \ref{prob:lifting-mod} for $\Gamma = \Mod_g$ was answered negatively in \cite{CC}.  Bowden \cite{Bowden} discusses Problem \ref{prob:boundary} for the boundary map on the group of surface symplectomorphisms 
$\Symp(\Si_g^1) \to \Diff_0(S^1)$, and Epstein--Markovic discuss the boundary map from quasi-conformal homeomorphisms of the disk to quasi-symmetric homeomorphisms of the circle.  Interestingly, this latter problem is also related to Nielsen realization for the mapping class group of a surface; see Section \ref{sec:continuity} below.


\paragraph{Germs and holonomy of foliations.}
Considering foliations rather than foliated bundles leads to a class of related problems.   Let $\Diff^r(\R^n, 0)$ denote the group of diffeomorphisms of $\R^n$ fixing the origin, and $\mathcal{G}^r(k)$ the group of germs at the origin of elements of $\Diff^r(\R^n, 0)$.  Analogous to the holonomy of a flat bundle, when a manifold $M$ admits a transversely-$C^r$ codimension-$k$ foliation $\mathcal{F}$, each leaf $L$ has a holonomy representation  $\pi_1(L)\ra \mathcal{G}^r(k)$.  Thus, understanding the structure of $\mathcal{G}^r(k)$ and its finitely-generated subgroups plays an important role in the study of the structure of foliations. 

However, the algebraic structure $\mathcal{G}^r(k)$ is not well understood.  For instance, the following problem is open.
\begin{qu} \label{q:germs_iso}
Suppose $\phi: \mathcal{G}^r(n) \to \mathcal{G}^s(m)$ is an isomorphism.  Is it necessarily true that $m = n$, $r=s$, and $\phi$ is an inner automorpshim?
\end{qu} 

\noindent The corresponding (positive) result for isomorphisms $\Diff^r(M) \to \Diff^s(M)$, where $M$ is a compact manifold, is a theorem of Filipkiewicz \cite{Filipkiewicz}.  

Finitely-generated subgroups of $\mathcal{G}^r(n)$ are also poorly understood.  Navas \cite{DNR} asked whether every finitely-generated subgroup of $\mathcal{G}^0(2)$ is isomorphic to a subgroup of $\Diff^0(\R^2)$. While it is expected that the answer is negative, the question was motivated by a positive result in the case of $\R^1$; see \cite[\S 1.1.3]{DNR}. 
Because of the natural surjection $\Diff^r(\R^n, 0) \to \mathcal{G}^r(n)$, the study of groups of germs lends itself to section problems. 

\begin{prob} \label{prob:germ}
Which finitely-generated groups are subgroups of $\mathcal{G}^r(n)$?  What are obstructions to lifting a representation $\rho: \Gamma \to \mathcal{G}^r(n)$ to $\Diff^r(\R^n, 0)$?  
\end{prob}

Thurston's \emph{stability theorem} \cite{Thurston} states that finitely-generated subgroups of $\mathcal{G}^1(n)$ with trivial derivative at $0$ necessarily have the property that they admit surjective homomorphisms to $\Z$ (i.e. $\mathcal{G}^1(n)$ is a {\em locally indicable} group).  While Thurston's motivation for this result was a generalization of the Reeb stability theorem for foliations, we will see numerous applications of this stability theorem to other problems -- including approaches to Problem \ref{prob:lifting-mod} -- described in \S\ref{sec:dynamical}.  One is optimistic that any further answer to the first part of Problem \ref{prob:germ} would have a similar range of applications.

\paragraph{Organization of paper.}

In order to draw parallels between the various problems described above, we have organized this paper by \emph{technique} (broadly categorized as dynamical or cohomological) rather than by results. In \S\ref{sec:Cohomological} we discuss examples where cohomology of groups and classifying spaces has been successfully used to obstruct lifting problems. \S\ref{sec:dynamical} contains a discussion of dynamical techniques used to provide other obstructions. In \S\ref{sec:positive} we give some positive results for Problem \ref{prob:lifting-mod}, i.e.\ interesting groups that \emph{can} be realized by diffeomorphisms. 
Questions and problems have been attributed to their authors when known, although we did not make an exhaustive attempt to trace them to their original sources.

\paragraph{Acknowledgements.}
The authors would like to thank B. Farb, N. Salter and A. Kupers for feedback on this work.  Theorem \ref{thm:germ_nonrealize} came up in discussion with F. Le Roux.  
K.M.\ was partially supported by NSF grant DMS 1606254, and B.T.\ was partially supported by NSF grant DMS 1502794.

\section{Cohomological techniques}  \label{sec:Cohomological}

In this section we take an algebro-topological approach to lifting problems.  In its most basic form, the general approach is as follows:
Given a surjection $\pi: G \to H$ and a homomorphism $\rho:\Gamma\ra H$, a lift $\tilde{\rho}$ as in the figure on the left below induces maps on group cohomology as indicated on the right.

\[\begin{xy}
(0,12)*+{G}="A";
(-20,0)*+{\Gamma}="B";
(0,0)*+{H}="C";
{\ar@{-->}"B";"A"}?*!/_3mm/{\tilde\rho};
{\ar "A";"C"}?*!/_3mm/{\pi};
{\ar "B";"C"}?*!/^3mm/{\rho};
\end{xy}
\hspace{1in}\begin{xy}
(0,12)*+{H^*(G)}="A";
(-25,0)*+{H^*(\Gamma)}="B";
(0,0)*+{H^*(H)}="C";
{\ar@{-->}"A";"B"}?*!/^3mm/{\tilde\rho^*};
{\ar "C";"A"}?*!/^3mm/{\pi^*};
{\ar "C";"B"}?*!/_3mm/{\rho^*};
\end{xy}\]

Classes in the kernel of $\pi^*$ serve as obstructions to lifts.  Specifically, if $c \in \ker\pi^*$, and $\rho$ is a representation such that $\rho^*(c)\neq0\in H^*(\Gamma)$, then no map $\tilde \rho$ can make the diagram commute.  

While this approach (or variations of it) can be successful, 
in general it can be difficult to find a class $c$ that works, and the absence of cohomological obstructions is not sufficient to guarantee the existence of a lift. For an example, compare Theorems \ref{thm:Nariman} and \ref{thm:st} below. However, one benefit of the cohomological approach (when it works) is that often one can conclude that no realization exists even after passing to a finite-index subgroup of $\Gamma$. 

\subsection{The Euler class as an obstruction}

We begin with a simple example of the strategy introduced above. 
Fix $h\ge2$.  Then $\pi_1(\Si_h)$ is a discrete, subgroup of $\PSL(2,\R)$, which acts on $S^1$ by M\"obius transformations.  Let $\rho:\pi_1(\Si_h)\ra\PSL(2,\R)\ra \Homeo^+(S^1)$ denote this homomorphism.  

\begin{prop}\label{prop:boundary-extend}
Fix $g\ge1$, and let $\Si_{g}^{1}$ be a genus $g$ surface with 1 boundary component. Then the action $\rho$ of $\pi_1(\Si_h)$ on $\partial \Si_{g}^{1}\simeq S^1$ does not extend to a representation into $\Homeo_0(\Si_{g}^{1})$. 
\end{prop}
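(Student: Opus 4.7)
The strategy is the Euler class obstruction outlined just before the proposition, carried out for the restriction map $R\colon\Homeo_0(\Sigma_g^1)\to\Homeo^+(S^1)$. Let $e\in H^2(\Homeo^+(S^1);\Z)$ denote the (discrete-group) Euler class, represented by the central extension
\[
0\to\Z\to\widetilde{\Homeo^+}(S^1)\to\Homeo^+(S^1)\to 1.
\]
For the Fuchsian representation $\rho\colon\pi_1(\Sigma_h)\to\PSL(2,\R)\to\Homeo^+(S^1)$, a standard computation (or Milnor--Wood equality for the uniformization) gives $\rho^*(e)=\pm\chi(\Sigma_h)$ under $H^2(\pi_1(\Sigma_h);\Z)\cong\Z$, which is nonzero since $h\ge 2$. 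So it suffices to show that $R^*(e)=0$ in $H^2(\Homeo_0(\Sigma_g^1);\Z)$; any extension $\tilde\rho\colon\pi_1(\Sigma_h)\to\Homeo_0(\Sigma_g^1)$ of $\rho$ would then force
\[
\rho^*(e)=\tilde\rho^*\bigl(R^*(e)\bigr)=0,
\]
a contradiction.

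To prove $R^*(e)=0$, I would invoke Hamstrom's theorem: since $\chi(\Sigma_g^1)=1-2g<0$, the identity component $\Homeo_0(\Sigma_g^1)$ is contractible as a topological group. Consequently, the continuous homomorphism $R$ admits a unique continuous lift $\tilde R\colon\Homeo_0(\Sigma_g^1)\to\widetilde{\Homeo^+}(S^1)$ sending identity to identity (lift the map from the simply connected space $\Homeo_0(\Sigma_g^1)$ along the covering $\widetilde{\Homeo^+}(S^1)\to\Homeo^+(S^1)$). Uniqueness of lifts combined with continuity of multiplication shows $\tilde R$ is a group homomorphism: both $\tilde R\circ m$ and $m\circ(\tilde R\times\tilde R)$ are continuous lifts of $R\circ m$ that agree at $(\mathrm{id},\mathrm{id})$. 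Equivalently, the pullback central extension $R^*\widetilde{\Homeo^+}(S^1)\to\Homeo_0(\Sigma_g^1)$ splits, which is precisely the statement $R^*(e)=0$ at the level of group cohomology of the (now underlying discrete) groups.

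Combining the two ingredients gives the proposition. The main conceptual point — and the only nontrivial input beyond the Euler class formalism — is the contractibility of $\Homeo_0(\Sigma_g^1)$; everything else is a formal consequence of the lifting criterion for covering spaces and the nonvanishing of the Euler number of the Fuchsian representation. I do not anticipate any genuine obstacle: once contractibility is in hand, both the existence of $\tilde R$ and its homomorphism property follow by standard covering-space arguments, and the computation $\rho^*(e)\ne 0$ is classical.
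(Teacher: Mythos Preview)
Your proof is correct and follows essentially the same approach as the paper: both use the Euler class as the obstruction, the nonvanishing $\rho^*(e)=\chi(\Sigma_h)\neq 0$ for the Fuchsian representation, and the contractibility of $\Homeo_0(\Sigma_g^1)$ to kill $R^*(e)$. The only cosmetic difference is packaging---the paper phrases the vanishing via the contractibility of $B\Homeo_0(\Sigma_g^1)$ in the classifying-space diagram, while you split the defining central extension directly using simple connectivity and the covering-space lifting criterion; these are two presentations of the same fact.
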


Here, as always, $\Homeo_0$ denotes the identity component of the group.  As we remark after the proof, the situation is much more subtle (and the proposition false) if $\Homeo_0$ is replaced by $\Homeo$ in the statement above. 

The proof of Proposition \ref{prop:boundary-extend} is easy when framed in the language of characteristic classes and classifying spaces.
For the reader unacquainted with this theory, we summarize the main properties we need before giving the proof.  See \cite[Ch.\ 4]{morita-book} for more details.

\paragraph{Classifying space basics.} Every topological group $G$ has a classifying space $BG$ that classifies principal $G$-bundles. If $G<\Homeo(X)$, then $BG$ also classifies $X$-bundles with structure group $G$. If $G$ has the discrete topology (indicated by $G^\de$), then $BG^\de\sim K(G,1)$ is an \emph{Eilenberg--Maclane space} for $G$ (a connected CW-complex with fundamental group $G$ and contractible universal cover). In particular $B\Diff(\Si_g)$ classifies $\Si_g$-bundles and $B\Diff(\Si_g)^\de$ classifies flat $\Si_g$-bundles. 

The classifying space construction $G\mapsto BG$ is functorial: a continuous homomorphism $G\ra H$ induces a continuous map $BG\ra BH$. Furthermore, if $G\ra H$ is a homotopy equivalence, then $BG\ra BH$ is also a homotopy equivalence, and a short exact sequence $1\ra K\ra G\ra H\ra1$ induces a fibration $BK\ra BG\ra BH$.

In the special case where $G = \Homeo(S^1)$, the inclusion $\SO(2)\hra\Homeo_+(S^1)$ induces a homotopy equivalence, so the classifying space $B\Homeo_+(S^1)$ is homotopy equivalent to $B\SO(2)\sim \mathbb{C}P^\infty$. As such $H^*(B\Homeo(S^1);\Z)$ is a polynomial ring $\Z[e]$ generated by a class $e\in H^2(B\Homeo(S^1))$ which has the following property: a circle bundle $E\ra B$ classified by a map $f_E:B\ra B\Homeo(S^1)$ admits a continuous section if and only if $f_E^*(e)=0$.

\begin{proof}[Proof of Proposition \ref{prop:boundary-extend}]
If $\rho$ extends to $\tilde\rho:\pi_1(\Si_h)\ra\Homeo_0(\Si_{g}^{1})$, then there is a diagram that commutes up to homotopy:
\[\begin{xy}
(0,15)*+{B\Homeo_0(\Si_{g}^{1})}="A";
(-30,0)*+{\Si_h}="B";
(0,0)*+{B\Homeo_+(S^1)}="C";
(-20,-3)*+{B\rho}="D";
{\ar@{-->}"B";"A"}?*!/_3mm/{B\tilde\rho};
{\ar "A";"C"}?*!/_3mm/{B\pi};
{\ar "B";"C"}?*!/^3mm/{};
\end{xy}\]
Let $e(\rho):=B\rho^*(e)\in H^2(\Si_h)$ be the Euler class of $\rho$. On the one hand, $e(\rho)$ is the Euler class of the unit tangent bundle $T^1\Si_h\ra\Si_h$, which is nonzero because $\chi(\Si_h)\neq0$. On the other hand, $\Homeo_0(\Si_{g}^{1})$ is contractible, so $B\Homeo_0(\Si_{g}^{1})$ is also contractible, and this implies that $\pi^*(e)=0$. Thus $\til\rho$ does not exist. 
\end{proof}

The above argument shows that no action $\rho:\pi_1(\Si_h)\ra\Homeo_+(S^1)$ with $e(\rho)\neq0$ extends to $\Homeo_0(\Si_g^1)$ for $g\ge1$.  There are many such actions, even among representations into $\PSL(2,\R)$.   However, one can produce actions $\rho:\pi_1(\Si_h)\ra\Homeo(S^1)$ with $e(\rho)\neq0$ that \emph{do} extend to $\Homeo(\Si_{g}^{1})$ for any $g\geq 2$. This uses a ``stabilization'' argument \cite{Bowden}:  start with any $\rho':\pi_1(\Si_{h'})\ra\Homeo(S^1)$, present $\pi_1(\Si_{h'}) = \langle a_1,\ld,b_{h'} \mid \prod_{i=1}^{h'} [a_i, b_i]=1 \rangle$, and choose $\tilde \rho'(a_i), \tilde \rho'(b_i)\in\Homeo(\Si_g^1)$ extending $\rho'(a_i)$ and $\rho'(b_i)$ on $\pa\Si_g^1$ (technically we want extensions that are a trivial product on a collar neighborhood of $\pa\Si_g^1$). Then $f:= \prod_{i=1}^h [\tilde \rho'(a_i), \tilde \rho'(b_i)]$ belongs to the group $\Homeo_{N(\pa)}(\Si_g^1)$ of homeomoprhisms that are the identity on a neighborhood of $\pa\Si_g^1$. This group is perfect 
\cite{Fischer}
\cite[\S5.1]{farb-margalit}, so we can write $f^{-1} = \prod_{j=1}^n[x_j, y_j]$.  Let $h = h'+n$, and extend $\rho'$ to 
\[\rho:\pi_1(\Si_h) = \langle a_i, b_i, x_j, y_j \mid \prod_{i=1}^{h'} [a_i, b_i] \prod_{j=1}^{n} [x_j, y_j]=1 \rangle\ra\Homeo_+(S^1)\]
by mapping $x_j$ and $y_j$ to the identity.  By design, $e(\rho)=e(\rho')$, and $\rho$ extends to $\Homeo(\Si_{g}^{1})$. Note that $e(\rho)\in\Z$ determines the class $[f]\in\pi_0\big[\Homeo_{N(\pa)}(\Si_g^1)\cap \Homeo_0(\Si_g^1)\big]\simeq\Z$. In particular $x_i,y_i\in\Homeo_0(\Si_g^1)$ is possible only if $e(\rho)=0$. This is consistent with Proposition \ref{prop:boundary-extend}.

\begin{qu}
Given a representation $\rho: \pi_1(\Si_{h'})\ra\Homeo_+(S^1)$ as above, what is the minimal stabilization genus $h$ required to extend the action to $\Homeo_+(\Si_{g}^{1})$?  Are there examples where this answer is different in the $C^0$ and $C^\infty$ cases?  
\end{qu}

The Milnor-Wood inequality (Theorem \ref{thm:milnor-wood} below) gives bounds on the value of the Euler class $\rho^*(e) \in H^*(\Si_h; \Z)$ for a flat bundle.  Since none of the examples obtained by stabilization are maximal, we ask: 

\begin{qu}
Does there exist a representation $\rho: \pi_1(\Si_h)\ra\Homeo(S^1)$ with maximal Euler class that lifts to $\Homeo(\Si_{g}^{1})$?  
\end{qu}

Since $\Diff^r_0(\D^2) \ra \Diff^r_0(S^1)$ is a homotopy equivalence for all $r$, the argument in the proof of Proposition \ref{prop:boundary-extend} gives no obstruction to extending any action on $S^1$ to an action on the disk. (Indeed for $r=0$, any group action can be extended by ``coning off'' the circle to the disk.  However, we will see examples of group actions by $C^1$ diffeomorphisms that cannot be lifted to $\Diff^1(\D^2)$ in Section \ref{sec:dynamical}.)   Geometric actions of surface groups provide interesting examples to study: 

\begin{prob}
The action of $\PSL(2,\R)$ by M\"obius transformations on $S^1$ extends to a smooth action on $\D^2$, so the representations $\pi_1(\Si_h)\ra\PSL(2,\R)\ra \Diff^\infty(S^1)$ discussed above extend to $\Diff^\infty(\D^2)$. 
\begin{enumerate}
\item[(a)] Classify all extensions of these actions. (Are there any exotic ones?)
\item[(b)] If $k>1$ divides $2h-2$, then $\pi_1(\Si_h)$ embeds as a lattice in the central extension of $\PSL(2,\R)$ by a cyclic group of order $k$, which also acts naturally on the circle.  (This action is simply the lift of the natural M\"obius action of $\PSL(2,\R)$ to the $k$-fold cover of $S^1$, which is also a circle.)   Does such an action of $\pi_1(\Si_h)$ on $S^1$ extend to $\Diff^\infty(\D^2)$?
\end{enumerate} 
\end{prob}

The actions described in (b) are called \emph{geometric} and share many properties with Fuchsian actions, including strong topological rigidity \cite{Mann-invent, Mann-Wolff}.   It would be interesting to know if they have obstructions to extensions over the disc.

\paragraph{Surface braid groups and point-pushing.} 
There is a surjection $\Mod_{g,m}^b\ra\Mod_g^b$ that forgets the marked points. The kernel is described by an exact sequence
\begin{equation}\label{eqn:exact-sequence}\pi_1(\Diff(\Si))\ra\Br_m(\Si)\xrightarrow{P}\Mod_{g,m}^b\ra\Mod_g^b\ra1,\end{equation}
where $\Br_m(\Si)$ is the \emph{surface braid group}, defined as the fundamental group $\pi_1(\Conf_m(\Si))$ of the space of unordered configurations of $m$ points on $\Si$. The homomorphism $P$ is called the \emph{point-pushing homomorphism}; see \cite{farb-margalit} for further details.

\begin{thm}[Bestvina--Church--Souto]\label{thm:bcs}The homomorphism $P$ in (\ref{eqn:exact-sequence}) is not realized by $C^1$ diffeomorphisms when $\Si$ is closed, $g\ge2$ and $m\ge1$. 
\end{thm}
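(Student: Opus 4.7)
The plan is to derive a contradiction via the Euler class of the derivative of a hypothetical lift at a marked point. For $m\ge 2$, restricting to the finite-index pure braid subgroup and focusing on one marked point at a time reduces the problem to $m=1$, so I sketch that case: suppose $\tilde P:\pi_1(\Si_g,\ast)\to\Diff^1(\Si_g,\ast)$ is a lift of $P$. Taking derivatives at $\ast$ produces a linear representation $D:=d_\ast\circ\tilde P:\pi_1(\Si_g)\to\GL^+_2(T_\ast\Si_g)\cong\GL^+_2(\R)$ with a well-defined Euler class $e(D)\in H^2(\Si_g;\Z)\cong\Z$.

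The key geometric step is to show $e(D)=\chi(\Si_g)=2-2g$. The lift $\tilde P$ classifies a flat $\Si_g$-bundle $E\to B\pi_1(\Si_g)=\Si_g$ together with a flat section $s$ given by the marked point; the vertical tangent bundle along $s$ is, by construction, the flat $\GL^+_2(\R)$-bundle classified by $D$. The Birman exact sequence \eqref{eqn:exact-sequence} implies that the induced monodromy of $E$ in $\Mod(\Si_g)$ (after forgetting the section) is trivial, and combined with Earle--Eells' contractibility of $\Diff^1_0(\Si_g)$ for $g\ge 2$, this shows that $E$ is topologically trivial: $E\cong\Si_g\times\Si_g$. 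Under this identification, $s$ takes the form $b\mapsto(b,\si(b))$ for some $\si:\Si_g\to\Si_g$; choosing the trivialization compatibly with the point-pushing isotopy (which drags $\ast$ along each loop) one checks that $\si$ is homotopic to the identity, hence of degree one. The vertical tangent bundle along $s$ therefore equals $\si^{\ast} T\Si_g\cong T\Si_g$ as an abstract bundle, with Euler class $\chi(\Si_g)=2-2g$.

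Next, invoke Milnor's rigidity. Decompose $\GL^+_2(\R)\cong\SL_2(\R)\times\R^+$ via the determinant, and write $D=D_1\times D_2$; then $e(D)=e(D_1)$, and the Milnor--Wood inequality for flat $\SL_2(\R)$-bundles on $\Si_g$ gives $|e(D_1)|\le 2g-2$ with equality forcing $D_1$ (projected to $\PSL_2(\R)$) to be a discrete, faithful Fuchsian representation. In particular $D$ must be injective, and its image has the very rigid structure of a Fuchsian group.

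The final and most delicate step is to extract an outright contradiction from this Fuchsian structure, since Fuchsian representations of $\pi_1(\Si_g)$ exist in abundance; the contradiction must exploit the specific origin of $D$ as a derivative of a lifted point-pushing. My strategy would be to pick elements $\ga\in\pi_1(\Si_g)$ for which $P(\ga)$ admits standard mapping-class representatives supported on subsurfaces disjoint from $\ast$ -- for instance, simple loops disjoint from $\ast$, whose point-pushing is the product $T_{\ga_+}T_{\ga_-}^{-1}$ of Dehn twists along parallel curves. Comparing $\tilde P(\ga)$ with such a preferred representative, the two differ by an element of $\Diff^1_0(\Si_g,\ast)$, and applying Thurston's stability theorem to the subgroup these differences generate should force $D(\ga)=I$ for some nontrivial $\ga$, contradicting the injectivity of $D$ forced by the Fuchsian rigidity above. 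Matching the dynamical constraints coming from Thurston stability cleanly against the cohomological constraints from Milnor--Wood is the main challenge, and is where the real work lies.
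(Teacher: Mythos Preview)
Your overall strategy is exactly the one in the paper: take the derivative of a hypothetical lift at the marked point to get $D:\pi_1(\Si_g)\to\GL_2^+(\R)$, compute its Euler class to be $2-2g$, and contradict a Milnor--Wood bound. Your computation of $e(D)=2-2g$ via the vertical tangent bundle along the section of the trivialized bundle $\Si_g\times\Si_g$ is a perfectly good alternative to the paper's route through the Nielsen boundary map $\alpha:\Mod_{g,1}\to\Homeo(S^1)$; both identify the same class.

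The gap is in your third paragraph: you quote the Milnor--Wood inequality for flat $\SL_2(\R)$-bundles as $|e|\le 2g-2$, but that is the bound for $\PSL_2(\R)$ (or general $\Homeo^+(S^1)$) representations. For representations landing in $\GL_2^+(\R)$ or $\SL_2(\R)$ --- equivalently, for flat oriented plane bundles --- Milnor's original inequality gives the sharper bound $|e|\le g-1$ (this is the second clause of the paper's Theorem~\ref{thm:milnor-wood}). With the correct bound you are done immediately: $|2-2g|=2g-2>g-1$ for $g\ge 2$, contradiction. There is no equality case to analyze, no Fuchsian rigidity to invoke, and no need for the Thurston stability argument you sketch at the end --- which, as you yourself note, is where ``the real work lies'' and which is not actually carried out. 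Drop that entire final step and fix the bound, and your proof is complete and essentially identical to the paper's.
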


We sketch Bestvina--Church--Souto's argument, which relies on the fact that the Euler class introduced above is a \emph{bounded} cohomology class \cite{milnor,wood}.

\begin{thm}[Milnor-Wood inequality]\label{thm:milnor-wood} 
Let $\Si_g$ be a closed oriented surface of genus $g\ge1$, and let $[\Si_g]\in H_2(\Si_g;\Z)$ be the fundamental class. 
For any $\rho:\pi_1(\Si_g)\ra\Homeo(S^1)$, the Euler class $e(\rho)\in H^2(\Si;\Z)\simeq\Z$ satisfies 
\[2-2g\le e(\rho)\le 2g-2.\]
Furthermore, if $\rho$ factors through $r:\GL_2^+(\R)\ra\Homeo(S^1)$ acting on rays through the origin in $\R^2$, then $|e(\rho)|\le g-1$. 
\end{thm}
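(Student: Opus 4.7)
My approach is to represent $e\in H^2(G;\Z)$ by the universal central extension $0\to\Z\to\widetilde G\to G\to 1$, where $G=\Homeo^+(S^1)$ and $\widetilde G$ consists of orientation-preserving homeomorphisms $\tilde f:\R\to\R$ commuting with $T:x\mapsto x+1$, and then reduce to a commutator estimate on $\widetilde G$ using the translation number quasi-morphism.

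Given $\rho:\pi_1(\Si_g)\to G$, I pick arbitrary lifts $\tilde\alpha_i,\tilde\beta_i\in\widetilde G$ of $\rho(a_i),\rho(b_i)$. Since the surface relation $\prod_i[a_i,b_i]=1$ projects to the identity in $G$, the product $\prod_i[\tilde\alpha_i,\tilde\beta_i]$ lies in the kernel $\langle T\rangle$ and equals $T^N$ for a unique integer $N$; the standard cocycle calculation identifying $e$ with the class of this extension gives $N=e(\rho)[\Si_g]$, so the theorem reduces to showing $|N|\leq 2g-2$.

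To bound $N$, I use the translation number $\tau:\widetilde G\to\R$, $\tau(\tilde f)=\lim_{n\to\infty}\tilde f^n(0)/n$. I use three properties: (i) $\tau(T^k)=k$, so $\tau\bigl(\prod_i[\tilde\alpha_i,\tilde\beta_i]\bigr)=N$; (ii) $\tau$ is homogeneous, $\tau(\tilde f^n)=n\tau(\tilde f)$, and hence conjugation-invariant; (iii) $\tau$ is a quasi-morphism with \emph{strict} defect $|\tau(\tilde f\tilde g)-\tau(\tilde f)-\tau(\tilde g)|<1$. Strictness comes from the pointwise bound $|\tilde f(x)-x-\tau(\tilde f)|<1$, itself a consequence of the fact that the oscillation of the $1$-periodic function $\tilde f-\id$ is strictly less than $1$ (otherwise monotonicity of $\tilde f$ combined with $\tilde f(x+1)=\tilde f(x)+1$ would fail). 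Combining (ii) and (iii) yields $|\tau([\tilde f,\tilde g])|<1$ for any single commutator, and iterating (iii) $g-1$ times across the product gives
\[|N| \;<\; (g-1)+g \;=\; 2g-1;\]
since $N\in\Z$, this forces $|N|\leq 2g-2$.

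For the refined linear inequality, I replace $\tau$ with a rotation-number quasi-morphism $\tilde\theta$ on $\widetilde{\GL_2^+(\R)}$ defined via the Iwasawa decomposition $\GL_2^+(\R)=KAN$ with $K=\SO(2)$: the continuous $K$-component lifts canonically to $\R$ using the contractibility of $AN$. The decisive improvement is that $\tilde\theta$ has strict defect $<1/2$, because the angular parts of the $KAN$-factorizations of two elements and their product differ by less than $\pi$. Running the same commutator argument with halved defect gives $|N|<g-\tfrac12$, hence $|N|\leq g-1$.

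The main technical obstacle is establishing the \emph{strict} defect bounds, which furnish the decisive $-1$ in $2g-2$ and $g-1$: with $\leq 1$ and $\leq 1/2$ one would only get $|N|\leq 2g-1$ and $|N|\leq g$, respectively. For $\tau$ on $\widetilde G$ this strictness is classical and follows from the oscillation argument sketched above. For $\tilde\theta$ on $\widetilde{\GL_2^+(\R)}$ it is more delicate and is equivalently expressed as the statement that the Gromov norm of the bounded Euler class halves upon restriction to $\GL_2^+(\R)$.
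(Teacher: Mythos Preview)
The paper does not supply a proof of Theorem~\ref{thm:milnor-wood}; it is quoted as a classical result (with citations to Milnor and Wood) and used as input to Theorem~\ref{thm:bcs}. So there is no ``paper's proof'' to compare against. Your outline is the standard quasi-morphism approach, and the reduction $N=\tau\bigl(\prod_i[\tilde\alpha_i,\tilde\beta_i]\bigr)$ together with the arithmetic $|N|<(g-1)+g$ is correct \emph{provided} the strict defect bound holds.

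The gap is exactly there: the assertion $|\tau(\tilde f\tilde g)-\tau(\tilde f)-\tau(\tilde g)|<1$ for all pairs is \emph{false}, and the oscillation bound you invoke does not prove it. The oscillation argument shows that the \emph{un-homogenized} quasi-morphism $q(\tilde f)=\tilde f(0)$ has strict defect $<1$, but passing to the homogenization $\tau$ does not preserve this. Concretely, take $\tilde f,\tilde g\in\widetilde{\Homeo^+}(S^1)$ with $\tilde f\ge\id$, $\tilde g\ge\id$, $\tilde f$ fixing $0$, $\tilde g$ fixing $1/2$, and each moving points almost a full turn away from its fixed point. Then $\tau(\tilde f)=\tau(\tilde g)=0$, while $\tilde f\tilde g-\id$ is everywhere positive, exceeds $1$ somewhere, and dips below $1$ somewhere; hence $T^{-1}\tilde f\tilde g$ has a fixed point and $\tau(\tilde f\tilde g)=1$. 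So the defect $1$ is attained, and your argument as written only yields $|N|\le 2g-1$. The same issue recurs in the $\GL_2^+$ refinement: you assert defect $<\tfrac12$ from the Iwasawa angle, but what the antipodal symmetry actually gives (via the period-$\tfrac12$ displacement) is oscillation $<\tfrac12$, which is again one homogenization step away from the needed defect bound.

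To repair this you must either (i) work directly with a norm-$\tfrac12$ representative of the bounded Euler class (e.g.\ the orientation cocycle, representing $2e_b$ with values in $\{-1,0,1\}$) and pair with the simplicial volume $\|[\Si_g]\|_1=4g-4$; or (ii) follow Wood's original careful interval-tracking of $\prod[\tilde\alpha_i,\tilde\beta_i](0)$ with the canonical $\{0,1\}$-valued cocycle, where the commutator structure supplies the extra cancellation. Either route recovers the sharp $2g-2$; Milnor's linear bound then follows from the analogous norm-$\tfrac14$ statement for $r^*e_b$, or from his original matrix estimate.
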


\begin{proof}[Proof of Theorem \ref{thm:bcs}]
We sketch a proof in the case $m=1$. In this case $\Br_1(\Si_g)\simeq\pi_1(\Si_g)$. If $P$ is realized by diffeomorphisms we have the following diagram. 
\begin{equation}\label{eqn:bcs-diag}\begin{xy}
(0,15)*+{\Diff^1(\Si_{g,1})}="A";
(25,15)*+{\GL_2^+(\mathbb R)}="B";
(-20,0)*+{\pi_1(\Si_g)}="C";
(0,0)*+{\Mod_{g,1}}="D";
(25,0)*+{\Homeo(S^1)}="E";
{\ar "A";"B"}?*!/_3mm/{D};
{\ar "C";"A"}?*!/_3mm/{\til P};
{\ar "A";"D"}?*!/^3mm/{};
{\ar "C";"D"}?*!/^3mm/{P};
{\ar "B";"E"}?*!/_3mm/{r};
{\ar "D";"E"}?*!/^3mm/{\al};
\end{xy}\end{equation}
Here $D$ is induced by the action on the tangent space at the marked point and $r$ is the homomorphism from Theorem \ref{thm:milnor-wood}. The homomorphism $\al$ was originally defined by Nielsen; it comes from the isomorphism $\Mod_{g,1} \simeq\Aut(\pi_1\Si_{g})$ together with the action on the Gromov boundary $\pa\pi_1(\Si_g)\simeq S^1$ (or equivalently the boundary of the compactification of the universal cover $\tilde{\Sigma}\simeq\H^2$).  See \S \ref{sec:dynamical} for a description. The left-hand triangle commutes, but the right-hand square does not. However, it does commute up to homotopy on the level of classifying spaces \cite[\S3]{BCS}. The proof concludes by comparing the image of the Euler class $e\in H^2(B\Homeo(S^1))$ when pulled back to $H^*(\Si_g)\simeq\Z$ along the outer paths in the diagram. 

On the one hand, the composition $\al\circ P$  factors as $\pi_1(\Si_g)\ra\PSL(2,\R)\ra\Homeo(S^1)$, and it follows that $e(\al\circ P)\in H^2(\Si_g)$ is the Euler number of the unit tangent bundle $T^1\Si_g\ra\Si_g$ (as in the proof of Proposition \ref{prop:boundary-extend}). Thus $e(\al\circ P)=2-2g$. 
On the other hand, by the Milnor-Wood inequality (see Theorem \ref{thm:milnor-wood}), $| e(r\circ D\circ \til P)|\le g-1$. This contradiction implies that $\til P$ does not exist. 
\end{proof}

Topologically, $P:\pi_1(\Si_g)\ra\Mod_{g,1}$ is the monodromy of the trivial bundle with respect to the diagonal section $\Delta\subset\Si\times\Si$. Theorem \ref{thm:bcs} implies that this bundle has no flat connection such that the diagonal is \emph{parallel}, i.e.\ a leaf of the foliation. See also \cite{mv}.

\begin{qu}[Realizing point-pushing by homeomorphisms]
Can $P:\pi_1(\Si_g)\ra\Mod_{g,1}$ be realized by homeomorphisms? What about by homeomorphisms of the blow up of $\Si_{g,1}$ at the marked point (i.e.\ by homeomorphisms of $\Si_g^1$ that may be nontrivial on the boundary). If one tries to run the above proof of Theorem \ref{thm:bcs}, one must replace $\GL_2^+(\R)$ with $\Homeo(S^1)$ in (\ref{eqn:bcs-diag}), and then the Milnor--Wood inequality does not provide any contradiction. 
\end{qu}

\begin{qu}[Disc pushing]
If we replace the marked point by a boundary component, there is a ``disk-pushing" subgroup $D:\pi_1(T^1\Si_g)\hookrightarrow\Mod_g^1$ analogous to the point-pushing subgroup $P:\pi_1(\Si_g)\hookrightarrow\Mod_{g,1}$. Is $D$ realized by diffeomorphisms? The proof of Theorem \ref{thm:bcs} does not adapt to this case because the Euler class on $\Mod_g^1$ is trivial. 
\end{qu}

\begin{qu}[Handle pushing]
Consider an inclusion $\Si_g^1\hra\Si_{g+1}$ obtained by gluing on a genus-$1$ handle. This induces a homomorphism $\Mod_g^1\ra\Mod_{g+1}$. Is the composition $H:\pi_1(T^1\Si_g)\xra D\Mod_g^1\ra\Mod_{g+1}$ realized by diffeomorphisms? This homomorphism can be interpreted as ``handle-pushing". This problem is harder than disk-pushing since a realization does not have an obvious invariant submanifold (e.g.\ a point or a disk as before). 
\end{qu}

The second author proved the following generalization of Theorem \ref{thm:bcs} to many other locally symmetric manifolds \cite{tshishiku-nielsen}. 

\begin{thm}[Tshishiku] \label{thm:tshishiku-nielsen}
Let $G$ be a real simple noncompact Lie group. Assume $G$ is not isogenous to $\SO(n,1)$ for any $n\ge3$. Then there exists a lattice $\Ga<G$ so that for the associated locally symmetric space $M=\Ga\backslash G/K$, the point-pushing homomorphism $P:\pi_1(M)\ra\Mod(M,*)$ is not realized by diffeomorphisms. 
\end{thm}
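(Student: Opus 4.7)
The plan is to generalize the Bestvina--Church--Souto strategy (Theorem \ref{thm:bcs}), replacing the Milnor--Wood bound on the Euler class with a superrigidity-driven obstruction on characteristic classes of flat bundles.  Choose a torsion-free cocompact lattice $\Ga<G$ (Borel), set $M=\Ga\backslash G/K$, $n=\dim M$, and suppose for contradiction that a realization $\til P:\Ga\ra\Diff(M,*)$ of the point-pushing homomorphism $P$ exists.  Differentiation at the marked point yields a linear representation
\[\rho:=D\circ\til P:\Ga\longrightarrow\GL^+_n(\R),\]
and hence a flat $\R^n$-bundle $E_\rho\ra M$ classified by a map $M\ra B\GL^{+,\delta}_n(\R)$.

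The first step is to compare the rational characteristic classes of $E_\rho$ and the tangent bundle $TM$.  The compatibility $\pi\circ\til P=P$, together with the fact that the natural map $\Mod(M,*)\ra\Aut(\pi_1(M,*))$ sends $P(\ga)$ to the inner automorphism $c_\ga$ (the higher-dimensional analog of the role of $\al$ in the proof of Theorem \ref{thm:bcs}), forces the derivative action of $\til P(\ga)$ on $T_*M$ to agree, up to conjugation by the isotropy representation of $K$, with Riemannian parallel transport of $TM$ along $\ga$.  Consequently $E_\rho$ and $TM$ have the same rational Pontryagin classes, and the same rational Euler class when $n$ is even.

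The second step is to invoke superrigidity for $\rho$: Margulis when $G$ has higher rank, Corlette when $G$ is isogenous to $\mathrm{Sp}(n,1)$ or $F_4^{-20}$, and Gromov--Schoen when $G$ is isogenous to $\mathrm{SU}(n,1)$.  The omission of $\SO(n,1)$ in the hypothesis is precisely because each of these superrigidity statements fails for real hyperbolic lattices.  After passing to a finite-index subgroup, superrigidity forces $\rho$ to extend to a continuous representation of $G$ or to have relatively compact image; combining this with Chern--Weil theory and Milnor--Sullivan-type vanishing results for flat bundles then constrains the rational Pontryagin numbers of $E_\rho$ to lie in a fixed, finite-dimensional subspace that depends only on $G$, not on $\Ga$.

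Finally, Hirzebruch proportionality implies that the Pontryagin numbers of $TM$ are nonzero rational multiples of $\mathrm{vol}(M)$; by varying $\Ga$ through a congruence tower these numbers can be made arbitrarily large, contradicting the uniform bound on $E_\rho$ and thereby showing $\til P$ cannot exist.  The main obstacle is the quantitative comparison: identifying a specific characteristic number that is simultaneously (i) forced to be bounded on $E_\rho$ by superrigidity plus a flat-bundle vanishing theorem, and (ii) genuinely nonzero and unbounded for $TM$ along some tower of lattices in $G$.  A secondary technical point is making the rational identification $p_i(E_\rho)=p_i(TM)$ precise at the level of classifying spaces, in the spirit of \cite[\S3]{BCS}.
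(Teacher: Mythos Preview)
Your outline is consistent with the paper's one-sentence summary (which is all the paper gives, referring to \cite{tshishiku-nielsen} for the actual proof): follow the BCS template, with Pontryagin classes, Chern--Weil theory, and superrigidity replacing the Euler class and the Milnor--Wood inequality.

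Two points in your expanded plan are problematic, however. First, your justification that $p_i(E_\rho)=p_i(TM)$ because the derivative of $\tilde P(\gamma)$ ``agrees with Riemannian parallel transport'' is not valid: the lifting condition $\pi\circ\tilde P=P$ only pins down $\tilde P(\gamma)$ up to isotopy rel $*$, so its derivative at $*$ is not determined and certainly need not be the Levi--Civita holonomy along $\gamma$. For surfaces, this comparison is precisely what the Nielsen map $\alpha$ and the homotopy-commutative square of \cite[\S3]{BCS} supply; when $\dim M>2$ there is no Nielsen map and $\Diff_0(M,*)$ is no longer contractible, so this step becomes the central difficulty rather than a ``secondary technical point.''

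Second, your endgame misplaces superrigidity. Chern--Weil with the flat connection already gives $p_i(E_\rho)=0$ in $H^*(M;\Q)$ for \emph{any} flat $\GL_n(\R)$-bundle---no superrigidity required, and the constraint is not a ``finite-dimensional subspace'' but the single value zero. Once the comparison $p_i(E_\rho)=p_i(TM)$ is in hand, the contradiction with $p_i(TM)\neq 0$ (via Hirzebruch proportionality and a compact-dual computation) is therefore immediate for a single cocompact $\Gamma$; the congruence tower is superfluous. This means superrigidity must enter the actual argument elsewhere---most plausibly in establishing the characteristic-class comparison itself, or in handling those simple $G$ whose locally symmetric spaces have vanishing rational Pontryagin classes, where one must pass to an Euler-class obstruction and a genuine Milnor--Wood-type bound.
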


The outline is the same as the proof of Theorem \ref{thm:bcs}, but it uses different obstructions, including Pontryagin classes, Chern--Weil theory, and Margulis superrigidity.  

\begin{qu}
Let $M$ be a finite volume hyperbolic 3-manifold. Is the point-pushing homomorphism $P:\pi_1(M)\ra\Mod(M,*)$ realized by diffeomorphisms? 
\end{qu}

\subsection{The MMM classes and Morita's nonlifting theorem} 

In this section, we discuss the question of whether or not the surjection $\pi:\Diff_\pa(\Si_{g,m}^b)\ra\Mod_{g,m}^b$ is split, i.e.\ whether or not the entire mapping class group is realized by diffeomorphisms (this is a special case of Problem \ref{prob:lifting-mod}). There are a few low-complexity examples when $\pi$ does split; for example when $\Si=\Si_1$ is a closed torus or when $\Si=\Si_{0,3}^1$ is a disk with 3 marked points; these examples will be discussed more in \S\ref{sec:positive}. 
The first negative result was proved by Morita \cite{Morita}, originally with the assumption $g\geq 18$.

\begin{thm}[Morita nonlifting]\label{thm:morita}
Let $\Si_g$ be a closed surface of genus at least 2. Then the surjection $\pi:\Diff^r(\Si_g)\ra\Mod_g$ does not split for any $r\ge0$. Furthermore, if $g\ge10$ then $\pi$ has no $C^2$ splitting over any finite-index subgroup of $\Mod_g$.   
\end{thm}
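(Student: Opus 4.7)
The plan is to sketch Morita's cohomological approach via Miller--Morita--Mumford (MMM) classes, which handles the $r\ge 2$ case (both for the full $\Mod_g$ and for finite-index subgroups in the stable range). The $C^0$ and $C^1$ cases of the first statement lie outside this framework and invoke the separately cited theorems of Markovic / Markovic--Saric and Franks--Handel.

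For $g\ge 2$, Earle--Eells gives $B\Diff^r(\Si_g)\simeq B\Mod_g$, producing a natural ``flattening'' map $\iota: B\Diff^r(\Si_g)^\de \to B\Diff^r(\Si_g)\simeq B\Mod_g$. The MMM classes $\kappa_i = \pi_!(e^{i+1}) \in H^{2i}(\Mod_g;\Q)$, where $e\in H^2(\mathcal{E};\Q)$ is the Euler class of the vertical tangent bundle of the universal bundle $\pi:\mathcal{E}\to B\Mod_g$, are nonzero in the stable range (Miller--Morita--Mumford). A section $s:\Mod_g\to\Diff^r(\Si_g)$ would induce $Bs:B\Mod_g\to B\Diff^r(\Si_g)^\de$ with $\iota\circ Bs\simeq\id$, so $\iota^*:H^*(\Mod_g;\Q)\to H^*(B\Diff^r(\Si_g)^\de;\Q)$ would be split injective; in particular every MMM class would persist under $\iota^*$.

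The key step, for $r\ge 2$, is Bott's vanishing theorem. The universal flat $\Si_g$-bundle over $B\Diff^r(\Si_g)^\de$ carries a transverse horizontal foliation of codimension $2$, whose normal bundle is the vertical tangent bundle. For $r\ge 2$ Bott's theorem yields $p_1^k = 0$ in the rational cohomology of the total space for $k\ge 2$. Since $e^2 = p_1$ for a rank-$2$ oriented bundle, this forces $e^m = 0$ for all $m\ge 4$, hence $\iota^*(\kappa_i) = 0$ for all $i\ge 3$. But for $g$ in the stable range $\kappa_3 \in H^6(\Mod_g;\Q)$ is nonzero; this is the origin of the $g\ge 10$ bound, which ensures Harer stability in degree $6$. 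Split injectivity of $\iota^*$ then gives a contradiction.

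For the finite-index statement: by transfer, restriction $H^*(\Mod_g;\Q)\hookrightarrow H^*(\Gamma;\Q)$ is injective for any finite-index $\Gamma\le\Mod_g$, so $\kappa_3|_\Gamma\neq 0$ when $g\ge 10$, and the same Bott-vanishing argument applied to $\Gamma$ rules out a $C^2$ section. The main obstacle is the careful formulation of Bott's theorem at the level of the classifying space $B\Diff^r(\Si_g)^\de$, rather than at the level of an individual flat bundle over a smooth manifold: one must realize the universal bundle via a semi-simplicial / Kan model and invoke the Haefliger--Thurston--Mather machinery relating $H^*(B\Diff^{r,\de})$ to Gelfand--Fuks cohomology of formal vector fields, in order to run Bott vanishing universally.
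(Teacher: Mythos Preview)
Your proposal is correct and matches the paper's proof essentially step for step: Earle--Eells to identify $B\Diff^r(\Si_g)\simeq B\Mod_g$, Bott vanishing on the vertical tangent bundle of a flat $\Si_g$-bundle to kill $e^{i+1}$ (hence $\kappa_i$) for $i\ge 3$, nonvanishing of $\kappa_3$ in the stable range for the $g\ge 10$ bound, and transfer for finite-index subgroups; you also correctly flag that the $r=0,1$ cases of the first sentence are handled by the separately cited results of Markovic/Markovic--Saric and Franks--Handel rather than by this argument.

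One small remark: your closing worry about having to run Bott vanishing \emph{universally} on $B\Diff^r(\Si_g)^\de$ via Haefliger--Thurston--Mather/Gelfand--Fuks machinery is unnecessary. The paper (and Morita's original argument) simply applies Bott vanishing to each flat bundle $E\to M$ over a smooth base, which suffices because a class in $H^*(B\Diff^r(\Si_g)^\de;\Q)$ vanishes iff it vanishes on every such pullback. No semi-simplicial model or formal vector field cohomology is needed.
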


We sketch Morita's proof using the following theorem on the cohomology of $\Mod_g$. See \cite{wahl} for an exposition of the proof.

\begin{thm}[Morita, Harer stability, Madsen--Weiss] \label{thm:mw} In degrees $*\le 2(g-1)/3$, the cohomology $H^*(\Mod_g;\Q)$ is a polynomial algebra generated by certain \emph{tautological classes} $e_i\in H^{2i}(\Mod_g;\Q)$. As a characteristic class of a $\Si_g$-bundle $E\ra M$ over a manifold $M$, the class $e_i$ is defined as 
\[e_i(E)=\int_\Si e(T^vE)^{i+1}\in H^{2i}(M),\]
where $\mathbb R^2\ra T^vE\ra E$ is the \emph{vertical tangent bundle}, $e(T^vE)$ is the \emph{Euler class}, and $\int_\Si:H^*(E)\ra H^{*-2}(M)$ is the \emph{fiber-integration} (or Gysin) map. 
\end{thm}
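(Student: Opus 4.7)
The plan is to prove the theorem in four stages: construct the stable classes, stabilize the cohomology (Harer), identify the stable cohomology with that of an infinite loop space (Madsen--Weiss), and compute the rational cohomology of that target.

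First I would set up the universal construction. For $g \geq 2$, Earle--Eells gives that each component of $\Diff^+(\Si_g)$ is contractible, so $B\Diff^+(\Si_g) \simeq B\Mod_g$, and $H^*(\Mod_g;\Q) \cong H^*(B\Diff^+(\Si_g);\Q)$. Over the universal bundle $\Si_g \to E\Diff^+(\Si_g) \times_{\Diff^+(\Si_g)} \Si_g \to B\Diff^+(\Si_g)$, the vertical tangent bundle $T^v E$ is an oriented plane bundle with Euler class $e \in H^2(E)$, and fiber-integration $\int_\Si$ is well defined since the fiber is a closed oriented surface. Setting $e_i := \int_\Si e^{i+1}$ gives the MMM classes, and their naturality under pullback identifies them with universal characteristic classes of $\Si_g$-bundles. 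Next, I would compute $e_i$ on some explicit families (e.g.\ Kodaira surfaces, or products with hyperbolic pieces) to verify algebraic independence below the stable range; alternatively, one defers this to the Madsen--Weiss computation.

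For the stabilization step, fix a pair of pants $P$ and let $\Si_{g,2} \hookrightarrow \Si_{g+1}$ be the inclusion obtained by capping off two boundary circles with $P$. This induces a map $B\Mod_{g,2} \to B\Mod_{g+1}$, and I would prove Harer's theorem that it is an isomorphism on $H^*(-;\Z)$ in a range of degrees of size linear in $g$. The standard approach is to have $\Mod_{g,b}$ act on a simplicial complex of non-separating arcs or curves, prove high connectivity of this complex using a surgery / bad-simplices argument, and then run the resulting spectral sequence whose stabilizers are mapping class groups of simpler surfaces, inducting on $g$. The MMM classes are compatible with stabilization (their pullback under the gluing map is again $e_i$), so they descend to stable classes in $H^*(\Mod_\infty;\Q)$.

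The deep step is Madsen--Weiss: the plus construction of $B\Mod_\infty$ is homology-equivalent to the identity component $\Omega^\infty_0 MTSO(2)$ of the infinite loop space of the Madsen--Tillmann spectrum $MTSO(2)$, which is the Thom spectrum of $-\gamma_2$ over $BSO(2)$. I would sketch the proof via the cobordism category $\mathcal{C}_2$ of surfaces with boundary: the Galatius--Madsen--Tillmann--Weiss theorem identifies $B\mathcal{C}_2 \simeq \Omega^{\infty-1} MTSO(2)$ by a parametrized Pontryagin--Thom / scanning argument, and then the group-completion theorem applied to the monoid built from self-cobordisms of a surface, together with Harer stability, yields the identification of $\mathbb{Z} \times B\Mod_\infty^+$ with $\Omega^\infty MTSO(2)$. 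The hard part is exactly this identification, and in an expository account I would cite it and describe the role of scanning rather than reprove it.

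Finally, I would compute $H^*(\Omega^\infty_0 MTSO(2);\Q)$. Rationally, the Thom isomorphism plus the splitting principle give $H^*(MTSO(2);\Q) = \Q\langle e^{-1}, e^0, e^1, \ldots \rangle$ in suitable degrees, with the generator $e^{i+1}$ (suspension of $e^i$ via the Thom class) in degree $2i$. Since $\Omega^\infty_0$ of a spectrum has rational cohomology equal to the free graded-commutative algebra on the rational cohomology of the spectrum in positive degrees (Milnor--Moore, as the rational homology of an infinite loop space is a Hopf algebra, hence free commutative), we get a polynomial algebra on generators $e_i \in H^{2i}$ for $i \geq 1$. Tracing the Pontryagin--Thom construction back shows these generators are precisely the MMM classes. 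Combined with Harer stability, this proves the theorem in the stated range.

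The main obstacle is unquestionably the Madsen--Weiss identification; the stabilization and final cohomology computation are technically involved but conceptually standard, whereas the parametrized surgery argument needed to identify $B\Mod_\infty^+$ with the infinite loop space requires the entire machinery of cobordism categories and is the real content.
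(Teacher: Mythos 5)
The paper does not prove Theorem~\ref{thm:mw}; it states it as a citation of Morita, Harer, and Madsen--Weiss, and points the reader to Wahl's survey for an exposition. So there is no in-paper argument to compare against, and what you should be judged on is whether your sketch faithfully reproduces the standard route. It does. The four-stage structure you lay out --- (i) identify $B\Mod_g$ with $B\Diff^+(\Si_g)$ via Earle--Eells and define the $\kappa$-classes by fiber integration of powers of the Euler class of the vertical tangent bundle, (ii) Harer stability via high connectivity of arc complexes and the stabilizer spectral sequence, (iii) the Madsen--Weiss / Galatius--Madsen--Tillmann--Weiss identification of $\Z\times B\Mod_\infty^+$ with $\Omega^\infty MTSO(2)$ via the surface cobordism category and scanning, followed by group completion, and (iv) rational computation of $H^*(\Omega^\infty_0 MTSO(2);\Q)$ using the Thom isomorphism, rational Hurewicz, and Milnor--Moore --- is exactly the modern proof, and you correctly isolate (iii) as the genuine difficulty.

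Two minor points worth tightening. First, the indexing in your description of $H^*(MTSO(2);\Q)$ is muddled: since $MTSO(2)$ is the Thom spectrum of $-\gamma_2$ over $BSO(2)=\C P^\infty$, the Thom class $u$ sits in degree $-2$, and the rational cohomology is spanned by $u\cdot e^{i}$ in degree $2i-2$ for $i\ge0$; the class that becomes $\kappa_i=e_i\in H^{2i}$ after applying $\Omega^\infty_0$ and Milnor--Moore is $u\cdot e^{i+1}$, in degree $2i$. Writing $\Q\langle e^{-1},e^0,e^1,\dots\rangle$ obscures this. Second, invoking the splitting principle here is unnecessary since $H^*(BSO(2);\Q)$ is already polynomial on a degree-$2$ class; the only inputs are the Thom isomorphism and the rational Hurewicz theorem for spectra. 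Neither affects the correctness of the argument, and the paper's use of this theorem in the proof of Theorem~\ref{thm:morita} only requires the statement as cited, not any of this machinery.
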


Morita \cite{Morita} defined the characteristic classes $e_i\in H^*(\Mod_g)$, and showed they are often nonzero (by constructing interesting $\Si_g$-bundles). Harer \cite{Harer} proved that they are \emph{stably} nonzero, and Madsen--Weiss \cite{Madsen-Weiss} proved that these classes account for all of the cohomology in low degree (their techniques also give an alternate proof that $e_i\neq0$ for $g$ large). 
\begin{proof}[Proof of Theorem \ref{thm:morita}]
Let $\Diff^2(\Si_g)^\de$ denote $\Diff^2(\Si_g)$ with the discrete topology.  We may view $\pi$ as a composition of continuous maps 
$\Diff^2(\Si_g)^\de \overset{\alpha} \ra\Diff^2(\Si_g) \overset{\beta} \ra\Mod_g$. To prove the theorem, it suffices to show that 
\[\alpha^*\circ\beta^*:H^*(B\Mod_g;\Q)\ra H^*(B\Diff^2(\Si_g);\Q)\ra H^*(B\Diff^2(\Si_g)^\de;\Q)\]
has nontrivial kernel.    The result of Earle--Eells \cite{EE} cited in the previous section is that $\Diff^2(\Si_g) \to \Mod_g$ is a homotopy equivalence, so by classifying space theory the induced map on classifying spaces is as well, hence  $\beta^*$ is an isomorphism.   Thus, we need to show that $\alpha^*$ is not injective. Since $B\Diff^2(\Si)^\de$ classifies flat surface bundles, $\ker\alpha^*$ is the ideal of characteristic classes of $\Si_g$-bundles that vanish for flat bundles. Using Bott's vanishing theorem (a topological obstruction to existence of certain foliations), which applies because of the $C^2$ hypothesis, one checks that if $E\ra M$ is flat, then $e_i(E)=0$ for $i\ge3$. By  the Theorem \ref{thm:mw}, $e_3\in H^*(\Mod_g;\Q)$ is nonzero if $g\ge10$. Thus $e_3$ is a \emph{nonzero} element of $\ker\alpha^*$. 

If $j:\Ga\hra\Mod_g$ is finite index, then the induced map $j^*:\H^*(\Mod_g;\Q)\ra H^*(\Ga;\Q)$ is injective by transfer, so $j^*(e_3)\neq0$ and the same argument shows $\Ga$ is not realized by diffeomorphisms. 
\end{proof}

The proof above shows $\Mod_g$ is not realized by $C^2$ diffeomorphisms when $g\ge10$ 
As mentioned in the introduction, there are now many different proofs of Theorem \ref{thm:morita} \cite{BCS,FH,ST,markovic-saric} that show $\Mod_g$ is not realized by $C^1$ or $C^0$ diffeomorphisms (with varying improvements on the genus bound); in particular, both Theorem \ref{thm:bcs} and Theorem \ref{thm:st} have Theorem \ref{thm:morita} as a consequence. However, none of these alternate proofs give information about finite-index subgroups of $\Mod_g$.  In particular, Markovic--Saric's proof of non-realizaiton by homeomorphisms uses torsion elements in a crucial way, so does not answer the following.   

\begin{prob}[Robust non-lifting]
Show that no finite-index subgroup of $\Mod_g$ is realized by homeomorphisms for $g\ge2$. 
\end{prob}

A seemingly more difficult problem is to show non-lifting for the \emph{Torelli group}. 

\begin{prob}
Show that $\ca I_g$, the kernel of $\Mod_g\ra\text{Sp}_{2g}(\Z)$ is not realized by diffeomorphisms. 
\end{prob}

The question of realizing mapping class groups $\Mod_{g,m}^b$ when $m,b\ge1$ is also interesting. For example, the mapping class group $\Mod_{0,m}^1$ of a disk $\mathbb D^2$ with $m$ marked points $\mathbf{z}_m\sbs\mathbb D^2$ is the classical braid group $B_m$. The following theorem of Nariman \cite{Nariman2017} states that there is no cohomological obstruction to splitting $\Diff_\pa(\mathbb D^2-\mathbf{z}_m)\ra B_m$. Nevertheless, in contrast, the mapping class group of a disk is not realized by $C^1$ diffeomorphisms -- see Theorem \ref{thm:st} below. 

\begin{thm}[Nariman] \label{thm:Nariman}
Consider the surjection $\Diff^\infty(\D^2-\mathbf{z}_m)\ra B_m$. The induced map on group cohomology $H^*(B_m; A) \to H^*(\Diff^\infty(\D^2- \mathbf{z}_m); A)$ is split injective in all cohomological degrees and for all abelian groups A.
\end{thm}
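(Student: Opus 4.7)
My plan is to prove the stronger statement that the map is in fact an isomorphism, by applying a Mather--Thurston-style homological vanishing theorem to the Serre spectral sequence of a natural fibration. Let $G := \Diff^\infty_\partial(\D^2 - \mathbf{z}_m)$, and let $G_0$ be its identity component (so $\pi_0 G = B_m$). The extension of topological groups
\[ 1 \to G_0 \to G \to B_m \to 1 \]
gives, after passing to classifying spaces of the underlying discrete groups, a fibration
\[ BG_0^\delta \to BG^\delta \to BB_m. \]
If one can show that $BG_0^\delta$ is acyclic with coefficients in every abelian group $A$, the Serre spectral sequence collapses, and the edge map $H^*(BB_m; A) \to H^*(BG^\delta; A)$ --- which is exactly the map in the theorem --- is an isomorphism, and in particular split injective.

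The first ingredient is contractibility of $G_0$ as a topological group. Combining Smale's theorem (that $\Diff^\infty_\partial(\D^2)$ is contractible) with the evaluation fibration
\[ G \to \Diff^\infty_\partial(\D^2) \to \Conf_m(\D^2), \]
one sees that $\pi_i(G) \cong \pi_{i+1}(\Conf_m(\D^2))$, which vanishes for $i \geq 1$ because $\Conf_m(\D^2) \simeq K(B_m, 1)$. Hence each component of $G$ is weakly contractible, so $BG_0 \simeq \ast$.

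The second, main ingredient is a Mather--Thurston homological $h$-principle: the comparison map $BG_0^\delta \to BG_0$ is a homology equivalence with coefficients in any abelian group. Granting this, the acyclicity of $BG_0^\delta$ follows immediately from the contractibility of $BG_0$.

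The principal obstacle is establishing this Mather--Thurston statement for $G_0$, whose elements are required to fix $\partial \D^2$ and $\mathbf{z}_m$ only pointwise (rather than on a neighborhood). The strategy is to reduce to a version of Mather's original theorem for compactly supported diffeomorphism groups of open manifolds: I would show that $G_0$ is homotopy equivalent to $\Diff^\infty_c(\mathrm{int}(\D^2) \setminus \mathbf{z}_m)_0$ --- by a standard neighborhood-fattening argument --- and then quote Mather's theorem (or a mild extension, due to Thurston, McDuff, or Nariman himself) to conclude that $B\Diff^\infty_c(\mathrm{int}(\D^2) \setminus \mathbf{z}_m)_0^\delta$ is acyclic. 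Once the Mather--Thurston input is in hand, the rest of the argument is essentially formal via the Serre spectral sequence.
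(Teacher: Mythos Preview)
The paper is a survey and does not supply its own proof of this theorem; it is simply quoted as a result of Nariman. So there is no in-paper argument to compare against, but your proposal has a genuine gap worth naming.

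Your plan hinges on the assertion that $BG_0^\delta$ is acyclic, which you attribute to a ``Mather--Thurston homological $h$-principle'' combined with the contractibility of $BG_0$. But no form of Mather--Thurston asserts that $B\Diff_c(M)_0^\delta \to B\Diff_c(M)_0$ is a homology equivalence. What the theorem does is identify the homology of the homotopy fiber $B\overline{\Diff_c}(M)$ with that of a space of compactly supported sections of a bundle over $M$ with fiber the Haefliger space $\overline{B\Gamma_n}$. For that section space to be acyclic you would need $\overline{B\Gamma_2}$ to be, in effect, contractible. For $n=1$ this is known to fail (the Godbillon--Vey class gives $H^3(\overline{B\Gamma_1};\R)\neq 0$), and for $n=2$ even the much weaker statement that $\overline{B\Gamma_2}$ is $4$-connected is a well-known open problem; Thurston proved only $(n{+}1)$-connectivity. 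Mather's original theorem yields perfectness of $\Diff_c(M)_0$ (so $H_1=0$), not acyclicity of its classifying space. In effect you are attempting to prove that the map is an \emph{isomorphism}, which is strictly stronger than the split injectivity Nariman establishes and would, if true, resolve a central open question in foliation theory.

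There is also a smaller gap in your reduction step: a homotopy equivalence of \emph{topological} groups $G_0 \simeq \Diff_c(\mathrm{int}(\D^2)\setminus\mathbf{z}_m)_0$ does not by itself identify the group cohomology of the underlying \emph{discrete} groups, which is what your spectral sequence actually requires.
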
 
Nariman's work also considers another question related to subgroups of mapping class groups.  
 There is a natural ``geometric" map $\psi: \Br_{2g+2} \hookrightarrow \Mod_{g}^{2}$ induced by lifting mapping classes to a double cover $\Sigma_{g}^{2}$ of the disk $\D$ ramified over the points of $\mathbf{z}_{2g+2}$.   One description of this map is as follows:  Each $f \in \Diff(\D, \mathbf{z}_{2g+2})$ has a canonical lift to a homeomorphism of the cover $\Sigma_{g}^{2}$; this is the lift that fixes both boundary components pointwise.  This gives an injective map $\Psi: \Diff(\D, \mathbf{z}_{2g+2}) \to \Homeo(\Sigma_{g}^{2}, \partial \Sigma_{g}^{2})$, and the induced map on isotopy classes is $\psi$.  Nariman asked if $\Psi$ can be ``smoothed":  is there a homomorphism $\Diff(\D, \mathbf{z}_{2g+2}) \to \Diff(\Sigma_{g}^{2},  \partial \Sigma_{g}^{2})$ that induces $\psi$ on mapping class groups?   
Building on the techniques used in his proof of Theorem \ref{thm:Nariman}, Nariman shows that there is no cohomological obstruction.

\begin{thm}[Nariman \cite{Nariman2017}]
Let $g \geq 1$.  
There is a space $Y$, homologically equivalent to the classifying space $B \Diff^\delta (\Sigma_{g}^{2}, \partial \Sigma_{g}^{2})$ and a map $\Phi$ such that the following diagram is homotopy commutative
   \[
    \xymatrix{ B \Diff^\delta (\D^2,\mathbf{z}_{2g+2}) \ar[r]^{\Phi}  \ar[d] & Y \ar[d] \\
              B \Mod(\D^2, \mathbf{z}_{2g+2}) \ar[r]_{B\psi} & B \Mod(\Sigma_{g}^{2})  } 
\]
\end{thm}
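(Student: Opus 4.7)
The plan is to take $Y := B\Homeo^\de(\Sigma_g^2, \partial \Sigma_g^2)$ and $\Phi := B\Psi$, where $\Psi$ is the canonical lift featured in the paragraph preceding the theorem statement. Since $\Psi$ is a bona fide group homomorphism $\Diff(\D^2, \mathbf{z}_{2g+2}) \to \Homeo(\Sigma_g^2, \partial \Sigma_g^2)$, applying $B(-)$ to these groups with the discrete topology yields $\Phi$ immediately. The right-hand vertical map $Y \to B \Mod(\Sigma_g^2)$ is the one induced by $\Homeo^\de \to \Homeo \to \pi_0 \Homeo = \Mod$; the second arrow is a homotopy equivalence because $\Homeo_0(\Sigma_g^2, \partial\Sigma_g^2)$ is contractible (Hamstrom, since $\chi < 0$). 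Homotopy commutativity of the square in the statement then reduces to the tautology $\psi = \pi_0(\Psi)$, which holds by the very definition of $\psi$.

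The substantive content is to prove that $Y$ is homologically equivalent to $B \Diff^\de(\Sigma_g^2, \partial \Sigma_g^2)$. I would establish this by showing that the forgetful map
\[
B \Diff^\de(\Sigma_g^2, \partial \Sigma_g^2) \longrightarrow B \Homeo^\de(\Sigma_g^2, \partial \Sigma_g^2) = Y
\]
induces an isomorphism on $H_*(-;A)$ for every abelian group $A$. Both spaces fit into fibrations over the common base $B\Mod(\Sigma_g^2)$, with fibers $B\Diff^\de_0$ and $B\Homeo^\de_0$ of the identity components. A Serre spectral sequence comparison reduces the problem to showing the induced map on fibers is a homology isomorphism. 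The Mather--Thurston theorem identifies each fiber (after plus construction) with a space of sections of a bundle of Haefliger classifying spaces over $\Sigma_g^2$ relative to $\partial \Sigma_g^2$, so the task ultimately reduces to the codimension-$2$ comparison of $B\overline{\Gamma}_2^\infty$ and $B\overline{\Gamma}_2^0$ on the level of homology.

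The main obstacle is this last Haefliger comparison, together with carefully tracking the boundary trivialization throughout the Mather--Thurston machinery. A cleaner alternative, closer in spirit to the proof of Theorem \ref{thm:Nariman}, is to construct $Y$ directly as a semi-simplicial or scanning model for $B \Diff^\de(\Sigma_g^2, \partial \Sigma_g^2)$, and then to define $\Phi$ via the branched double-cover operation applied fiberwise to flat $\D^2$-bundles equipped with $2g+2$ disjoint transverse sections. In that formulation, the hard step becomes verifying that the branched-cover construction is compatible with the scanning maps on both sides, thereby inducing $\Phi$ on classifying spaces and making the diagram commute up to homotopy. Either route sidesteps the original problem --- smoothing $\Psi$ itself --- by replacing pointwise smoothing with a purely homological equivalence.
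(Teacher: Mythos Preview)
Your proposal diverges sharply from the paper's argument, and your first route has a genuine gap.

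\textbf{The gap.} Taking $Y = B\Homeo^\de(\Si_g^2,\partial\Si_g^2)$ and then showing that the forgetful map from $B\Diff^\de$ is a homology isomorphism is much stronger than what the theorem asserts. Your own reduction via Mather--Thurston is correct in outline, but its endpoint---``the codimension-$2$ comparison of $B\overline{\Gamma}_2^\infty$ and $B\overline{\Gamma}_2^0$ on the level of homology''---is not a lemma you can simply invoke. McDuff's theorem gives $B\overline{\Gamma}_2^0\simeq *$, and Mather--Thurston gives that $B\overline{\Gamma}_2^\infty$ is $3$-connected, but that is \emph{not} enough: the relevant section space is $\mathrm{Map}\big((\Si_g^2,\partial),(B\overline{\Gamma}_2^\infty,*)\big)$, whose $\pi_k$ involves $[\Si^k(\Si_g^2/\partial),\,B\overline{\Gamma}_2^\infty]$, hence the homotopy of $B\overline{\Gamma}_2^\infty$ in degrees $\ge 4$. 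Acyclicity of that section space is essentially a form of the (open) Haefliger--Thurston conjecture in codimension $2$. So the ``substantive content'' you isolate is not currently available. Your second route (a scanning/semi-simplicial model with a fibrewise branched cover) is plausible in spirit and closer to what Nariman actually does, but as written it is a description of a strategy, not a proof.

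\textbf{What the paper does instead.} The paper does not attempt any homological comparison at all. It proves something strictly stronger (Theorem~\ref{thm:Nariman_q}): there is an honest continuous homomorphism $\Diff(\D,\mathbf{z}_{2g+2})\to\Diff(\Si_g^2,\partial\Si_g^2)$ inducing $\psi$ on mapping class groups. The construction is elementary---blow up the marked points, re-embed copies of the blown-up disc so that the action near the branch set becomes a rigid permutation by translations, and then lift to the branched double cover, where the ``flatness'' at the marked points makes the lift smooth (Constructions~\ref{const:smoothing} and~\ref{const:blow_up}). With this in hand the theorem is immediate with $Y=B\Diff^\de(\Si_g^2,\partial\Si_g^2)$ itself and $\Phi$ the map induced by the lift; no Mather--Thurston, no Haefliger spaces. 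The trade-off is that the paper's argument is specific to this situation, whereas your intended route, if the Haefliger input were available, would be far more general.
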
 

We give an alternative proof in \S\ref{sec:positive} by constructing an explicit lift.   However, the following question remains open.

\begin{qu} 
Can the ``geometric" braid group $\psi: \Br_{2g+2} \hookrightarrow \Mod_{g}^{2}$ described above be realized in $\Diff(\Sigma_{g}^{2})$?  
\end{qu}

\section{Dynamical obstructions to realizations}\label{sec:dynamical}

This section showcases the use of dynamical tools in realization problems.   In many contexts, one can use the algebraic structure of a group $\Gamma$ to force some dynamical behavior -- such as the existence of fixed points or invariant sets -- when $\Gamma$ acts by diffeomorphisms on a manifold $M$.  If the action comes from a section of $\Diff^r_0(M) \to H$ for some group $H$, properties of $H$ often furnish additional dynamical information.   

The proofs of non-realizability of $\Mod_g$ by homeomorphisms due to Markovic and Markovic--Saric \cite{Markovic, MS} are the most sophisticated instance of this phenomenon.   The heart of Markovic's idea is to show that the realization of Dehn twists about simple closed curves have support on sets that, roughly, look like the annuli on which the Dehn twists are defined.  Once this is appropriately formalized, one uses relations in $\Mod_g$ to derive a contradiction.    As the argument is quite involved, we will not be able to do it justice here and instead refer the reader to Le Cavez's work \cite{LeCalvez}, which contains a nice exposition as well as some alternative techniques towards the proof. 

Here we focus on simpler examples that illustrate a tension between the algebraic structure of a group $\Gamma$ and the dynamics of a realization.  As a first basic illustration, we discuss non-realizability for groups of germs.

\subsection{Non-realizability of groups of germs}   \label{sec:germs}

Recall that $\mathcal{G}^r(n)$ denotes the group of germs at 0 of orientation-preserving $C^r$ diffeomorphisms of $\R^n$ fixing 0 (i.e. the quotient of $\Diff^r(\R^n, 0)$ by the subgroup of diffeomorphisms that restrict to the identity in a neighborhood of $0$), and $\Gamma \subset \mathcal{G}^r(n)$ is \emph{realizable by diffeomorphisms} if there is a section of  $\Diff^r(\R^n, 0) \to \mathcal{G}^r(n)$ over $\Gamma$.   Here, we prove the following non-realizability result, as a first step towards Problem \ref{prob:germ}.  

\begin{thm} \label{thm:germ_nonrealize}
There exists an (explicit) finitely-generated, torsion-free subgroup $\Gamma$ of $\mathcal{G}^0(n)$ that cannot be realized in $\Diff^0(\R^n, 0)$. 
\end{thm}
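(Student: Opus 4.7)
The plan is to prove non-realizability by constructing an explicit finitely-generated, torsion-free subgroup $\Gamma \subset \mathcal{G}^0(n)$ whose group law, visible at the germ level, cannot be promoted to a global group law in $\Diff^0(\R^n,0)$. The key point is that germ-level relations only demand identities hold in some neighborhood of $0$, whereas a realization requires them to hold on all of $\R^n$; for the right $\Gamma$, one should be able to exhibit a dynamical invariant at $0$ -- well-defined on germs -- that obstructs the promotion. For concreteness I would work in $n=2$, where the attribution to F.\ Le Roux strongly suggests that the obstruction comes from 2-dimensional dynamics of homeomorphisms at an isolated fixed point (fixed-point indices, prime-end rotation numbers, and Brouwer / Le Calvez--Le Roux theory).

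The construction would proceed as follows. First, exhibit explicit $\tilde\alpha_1,\ldots,\tilde\alpha_k \in \Homeo(\R^2,0)$ whose germs $\alpha_i$ are non-trivial and whose supports near $0$ are described combinatorially by a configuration of disjoint wedges/cones with vertex at $0$; the commutation relations among the $\alpha_i$ are then read directly off this picture. Second, add a ``scaling'' generator $\sigma \in \mathcal{G}^0(2)$, the germ of a linear contraction toward $0$ that permutes (or nests) the wedges in a prescribed way, and set $\Gamma := \langle \alpha_1,\ldots,\alpha_k,\sigma\rangle$. The conjugates $\sigma^j \alpha_i \sigma^{-j}$ produce infinitely many germs in $\Gamma$ whose supports accumulate at $0$ as rescaled copies of the original configuration. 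Torsion-freeness of $\Gamma$ should be visible from the explicit PL formulas. Third, supposing a section $\rho : \Gamma \to \Diff^0(\R^2,0)$ exists, use the germ-level commutation relations, together with the ``nesting'' forced by $\sigma$, to show that the $\rho(\sigma^j \alpha_i \sigma^{-j})$ must be realized with disjoint supports accumulating at $0$ in a rigid fractal pattern; then show that this configuration contradicts a local dynamical invariant of $\rho(\alpha_i)$ at $0$.

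The main obstacle is this third step: pinning down a local dynamical invariant that is (a) well-defined on $\mathcal{G}^0(2)$, (b) genuinely computable from the explicit germ data of $\Gamma$, and (c) forced, by the accumulation of supports at $0$ that the relations impose, to take a value inconsistent with that data. Because $C^0$ germs carry no differential information, the naive invariants (linearization, smooth Lefschetz index) are unavailable, so the invariant must be genuinely topological -- most plausibly a prime-end rotation number at the isolated fixed point $0$, or a Conley-type index for the restriction of $\rho(\alpha_i)$ to a punctured neighborhood of $0$. I would expect the proof ultimately to combine a careful choice of the combinatorial support pattern of the germs with a local rigidity theorem on $\R^2 \setminus \{0\}$ of Le Calvez--Le Roux type to extract the contradiction; the attribution of the theorem to a conversation with Le Roux is suggestive that exactly such a 2-dimensional dynamical rigidity statement is the crux.
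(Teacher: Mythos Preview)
Your proposal is not a proof but a strategy with an explicitly admitted gap: you say yourself that ``the main obstacle is this third step'' and then speculate about which invariant (prime-end rotation number, Conley index, Le~Calvez--Le~Roux theory) might work, without committing to one or showing how it would yield a contradiction. That is the honest state of affairs, but it means the proposal does not establish the theorem.

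The paper's proof is both more elementary and more general than what you outline. It works in every dimension $n$ (not just $n=2$), uses no prime ends, no rotation numbers, and no index theory. The group $\Gamma$ is built radially on $S^{n-1}\times\R$ from a Baumslag--Solitar subgroup $\langle a,b\mid aba^{-1}=b^2\rangle$ (with $a:x\mapsto 2x$, $b:x\mapsto x-1$ on the $\R$ factor) together with four auxiliary elements $f_1,f_2,g_1,g_2$ satisfying $f_1f_2=b$ and $[f_i,a]=[f_i,g_i]=1$ as germs. The only dynamical tool is the \emph{basin of attraction}: assuming a realization $\phi$, the relation $aba^{-1}=b^2$ forces the attracting basin of $\phi(a)$ to have closure contained in the attracting basin $B$ of $\phi(b)$; the commutation $[f_i,g_i]=1$, combined with knowledge of the germs of the $g_i$, then pins down the frontier of the $\phi(a)$-basin as an $f_i$-invariant set inside $B\setminus\{0\}$. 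Since $f_1f_2=b$, this set would be $\phi(b)$-invariant, contradicting the fact that $B$ is the basin of an attracting fixed point for $\phi(b)$.

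The moral: the crucial algebraic ingredient you are missing is the $BS(1,2)$ relation, which rigidly organizes the basins of two elements; and the crucial dynamical ingredient is just the elementary observation that commuting homeomorphisms permute each other's attracting fixed points and basins. The Le~Roux attribution in the acknowledgements refers to the genesis of the question, not to the use of his $2$-dimensional fixed-point machinery.
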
 

As we remark in the proof, the subgroup $\Gamma$ can in fact be taken to lie in $\mathcal{G}^\infty(n)$.  

Theorem \ref{thm:germ_nonrealize} was proved in \cite{Mann15} for $n=1$.  Here, we give a simplified argument that proves the general case.   The argument uses essentially only one dynamical tool (which will make a later appearance in Section \ref{sec:Franks-Handel}). This is the {\em basin of attraction}.  

\begin{defi} 
Let $f$ be a homeomorphism of a manifold $M$.  A point $p \in M$ is \emph{attracting} if there exists a neighborhood $W$ of $p$ such that $\bigcap_{n>0} f^n(\overline{W}) = p$.  In this case the \emph{basin of attraction} of $f$ at $p$ is the set $\{x \in M \mid f^n(x) \to p \}$.   
\end{defi} 

It follows from the definition that the basin of attraction is $f$-invariant, and equal to $\bigcup_{n>0}f^n(W)$ for any sufficiently small neighborhood $W$ of the attracting fixed point -- so in particular is open.    Furthermore, if $f$ and $g$ commute, then $g$ permutes the attracting fixed points of $f$ and their basins of attraction.

\begin{proof}[Proof of Theorem \ref{thm:germ_nonrealize}]
We first define $\Gamma$ by specifying homeomorphisms of which we take the germs, then prove that there is no realization.  The key algebraic properties of $\Gamma$ are that it contains a Baumslag-Solitar subgroup $BS(1,2)=\langle a,b\mid aba^{-1}=b^2\rangle$, as well as several commuting elements.  

Parametrize $\R^n \setminus \{0\}= S^{n-1} \times \R$, with the sets $S^{n-1} \times \{t\}$ approaching $0$ as $t \to -\infty$.  All of the homeomorphisms in $\Gamma$ will be defined radially, so we start by working on $\R$.   A $BS(1,2)$ subgroup of $\Diff^\infty(\R)$ is given by $a:x\mapsto 2x$ and $b:x\mapsto x-1$.  To define the other elements,  let $Y \subset X$ be two $a$-invariant disjoint unions of intervals in $(-\infty, 0)$ and 
let $f_1$ be a diffeomorphism supported on $X$ that agrees with $b$ on $Y$ in some neighborhood of $-\infty$.  Let $f_2 = f_1^{-1} \circ b$, which, in a neighborhood of $-\infty$, is supported on the $a$-invariant set $\R \setminus Y$.  Let $g_i$ be arbitrary diffeomorphisms of $\R$ that commute with $a$ in a neighborhood of $-\infty$ and have support equal to $\fix(f_i)$.  

For each $h \in \{a, b, f_1, f_2, g_1, g_2\}$ the map $(s, t) \mapsto (s,  h(t))$ of $S^{n-1} \times \R$ extends to a homeomorphism of $\R^2$ fixing $0$. 
(By first conjugating $a$ and $b$ by a strong contraction at $-\infty$ as in Construction \ref{const:smoothing}, one may in fact take these to be germs of smooth diffeomorphisms, infinitely tangent to the identity at 0; in which case $f_i$ and $g_i$ may be chosen so that these define diffeomorphisms as well.)  Abusing notation slightly, we identify $h$ with the germ of the homeomorphism so defined.   
These germs satisfy the relations $aba^{-1} = b^2$, $f_1f_2=b$, and $[f_i, a] = [f_i, g_i]=1$.  

Let $\Gamma<\mathcal{G}^0(n)$ be the subgroup generated by $a, b, f_1, f_2, g_1$ and $g_2$, and suppose that $\phi: \Gamma \to \Homeo(\R^n, 0)$ is a realization.  Let $B$ denote the attracting basin for $b$.  The relation $aba^{-1} = b^2$ implies that $B$ is $a$-invariant.    Note that, if $U$ is a sufficiently small ball about the origin, then $B$ is the increasing union of the sets $\bigcup_{n>0}a^{-1}(U)$; each a homeomorphic image of a ball.  Using this (and the annulus theorem), we can parametrize $B \setminus 0$ by $S^{n-1} \times \R$, on which $\phi(b)$ acts by $(s, t) \mapsto (s, t-1)$.    We now claim that the attracting basin for $\phi(a)$ has closure contained in $B$, and will show this by finding an $a$-invariant set properly contained in $B$.  Take a fundamental domain $S^{n-1} \times [0,1]$ for $\phi(b)$; the relation $aba^{-1} =b^2$ implies that the image of any fundamental domain for $b$ under $a$ is a fundamental domain for $b^2$.   Now $\phi(a)(S^{n-1} \times [0,1])$ is compact so contained in $S^{n-1} \times [-M,M]$ for some $M$, hence
$\phi(a)^{-1}(S^{n-1} \times [-2M,2M])$ is a fundamental domain for $b^{2M}$ containing $[0,1]$, thus contained in $S^{n-1} \times [-2M,2M]$.  This gives an $a$-invariant compact set with closure contained in $B$, as desired.    

Finally, since $\phi$ is a realization, we known the germ of $\phi(g_i)$ at $0$: in a neighborhood of $0$ the homeomorphism $\phi(g_i)$ is supported on the disjoint union of an annulus $A \cong S^{n-1} \times I$ with its images under $\phi(a)^{n}$ for $n \in \N$.  Since $g_i$ and $a$ commute, the boundary of the basin of attraction of $a$ is equal to the set of accumulation points of the iterates $a^{-n}(A)$, for $n>0$.   Since $g_i$ and $f_i$ commute, this set of accumulation points is $f_i$-invariant.   But $f_1 f_2 = b$, contradicting the fact that there is no $b$-invariant set contained in $B \setminus \{0\}$.  
\end{proof} 

Despite the elementary nature of this example, the algebraic structure of groups of germs is far from understood.  As well as the problems raised in the introduction, we ask (expecting the answer to be positive):

\begin{qu}
Can $\mathcal{G}^r(n)$ and $\mathcal{G}^r(m)$ be distinguished by their finite subgroups?  What about finitely-generated, torsion-free subgroups?  
{\em Realizable} subgroups?
\end{qu}

\begin{qu}
Fix $n>1$.  Can $\mathcal{G}^r(n)$ and $\Diff^r(\R^n)$ be distinguished by their finitely-generated, torsion-free subgroups?  
\end{qu}

\subsection{$C^1$ techniques: linear representations and Thurston stability}  \label{sec:C^1}

In \cite{ghys91}, Ghys proved the following non-realization result for boundary maps.

\begin{thm}[Ghys]
There is no section of the natural restriction map $\Diff^1_0(\D^n) \to \Diff^1_0(S^{n-1})$.
\end{thm}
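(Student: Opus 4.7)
The plan is to assume a section $s\colon \Diff^1_0(S^{n-1}) \to \Diff^1_0(\D^n)$ exists and derive a contradiction by combining Mather's simplicity theorem with Thurston's stability theorem. Fix $p \in S^{n-1}$ and an open ball $B \subset S^{n-1}$ with $p \notin \overline{B}$, and set $G := \Diff^1_c(B)$. By Mather's theorem $G$ is simple (hence perfect), and every $g \in G$ fixes $p$ and is the identity on a neighborhood of $p$ in $S^{n-1}$.

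The first step is a derivative computation at $p$. For $g \in G$, the diffeomorphism $s(g)$ fixes $p \in \partial \D^n$ and preserves $T_p \partial \D^n$ together with the inward normal direction, so in the associated splitting
\[
D_p s(g) \;=\; \begin{pmatrix} I_{n-1} & v(g) \\ 0 & \lambda(g) \end{pmatrix}, \qquad \lambda(g) > 0,
\]
where the top-left block is $I_{n-1}$ because $g$ is the identity near $p$. The assignment $g \mapsto (v(g),\lambda(g))$ is a homomorphism from $G$ into the solvable affine group $\R^{n-1} \rtimes \R_{>0}$, so it vanishes on the perfect group $G$. Hence $D_p s(g) = I_n$ for every $g \in G$.

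Next I would invoke Thurston's stability theorem. Let $N \subset G$ denote the set of $g$ for which $s(g)$ is the identity on some neighborhood of $p$ in $\D^n$. This is a subgroup, and it is normal in $G$: for $h \in G$ (which fixes $p$) and $g \in N$ with $s(g)\equiv\mathrm{id}$ on an open set $U \ni p$, the conjugate $s(hgh^{-1})$ is the identity on $s(h)(U)$, another neighborhood of $p$. By simplicity of $G$, $N$ is either trivial or all of $G$. Applied to any finitely generated subgroup $\Gamma \leq G$ acting on a neighborhood of $p$ fixing $p$ with trivial derivative, Thurston's stability theorem forces $\Gamma \subseteq N$ unless $\Gamma$ admits a nontrivial homomorphism to $\R$; taking $\Gamma$ to be a finitely generated perfect subgroup of $G$ --- e.g., for $n \geq 4$, a smoothing of the linear $A_5$-action made compactly supported in $B$ --- rules out the latter, so $N = G$. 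Conjugating by diffeomorphisms in $\Diff^1_c(S^{n-1} \setminus B)$ that move $p$ to an arbitrary $q \in S^{n-1} \setminus \overline{B}$ spreads the conclusion: for every $g \in G$, $s(g)$ is the identity on a neighborhood of every point of $S^{n-1} \setminus \overline{B}$ in $\D^n$.

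The hard part is converting this localization into a contradiction. Applying the whole argument with $B$ replaced by each open ball $B' \subset S^{n-1}$ shows that for every $g \in \Diff^1_c(B')$ the support of $s(g)$ in $\D^n$ is confined to a neighborhood of $\overline{B'}$. The plan is then to combine this with fragmentation in $\Diff^1_0(S^{n-1})$ and with explicit commutator relations among diffeomorphisms with carefully chosen, partly overlapping supports; the tension between the forced localization of $s$-images near the boundary and the requirement that $s$ intertwine commutators should produce the contradiction. The low-dimensional case $n = 2$ requires separate treatment because $\Diff^1_c(B)$ for $B \subset S^1$ has no nontrivial finitely generated perfect subgroups (a consequence of Thurston stability itself); instead one studies $s|_{SO(2)}$: by Brouwer--Ker\'ekj\'art\'o, $s$ sends finite-order rotations to finite-order diffeomorphisms of $\D^2$ with a common interior fixed point $q$, and the derivative representation at $q$ yields a homomorphism of $SO(2)$ into $GL(2,\R)$ whose compatibility with $s$ and with $C^1$ linearization of torsion elements is expected to fail.
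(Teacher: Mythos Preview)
Your approach diverges substantially from the paper's and is incomplete at two essential points.

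First, a concrete error: there is no ``smoothing of the linear $A_5$-action made compactly supported in $B$.'' By Newman's theorem, a nontrivial finite group acting effectively on a connected manifold has fixed-point set with empty interior, so no nontrivial finite-order homeomorphism of an open ball can be compactly supported. Your Thurston-stability step therefore lacks the finitely generated perfect subgroup of $\Diff^1_c(B)$ it needs. (Torsion-free candidates do exist when $\dim B \geq 2$, but you have not produced one, and when $\dim B = 1$ none exist at all, as you yourself observe.) Second, and more seriously, you explicitly leave the ``hard part'' undone: even granting that $s(\Diff^1_c(B))$ is the identity near every boundary point away from $\overline{B}$, you offer only the hope that fragmentation and commutator relations will produce a contradiction, without carrying this out. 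Your $n=2$ sketch likewise ends with an ``expected to fail'' rather than an argument. As written, this is a plan rather than a proof.

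For comparison, the paper's argument (sketched only for even $n$) is entirely different and self-contained: it exhibits a specific finite-order element $f \in \Diff^\infty_0(S^{n-1})$, written as a product of commutators of elements that commute with $f$. A section $\phi$ would force $\phi(f)$ to have a unique interior fixed point $x$ (Brouwer plus Smith theory), necessarily fixed by those centralizing elements; but $D\phi(f)_x$ then has prime order, hence abelian centralizer in $\GL(n,\R)$, contradicting the commutator expression. No boundary localization, Mather simplicity, or Thurston stability enters. The paper notes that for odd $n$ Ghys' original argument is more involved and does use simplicity of the diffeomorphism group, so your instinct to bring in simplicity is not unreasonable for that parity --- but the execution here does not get there.
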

In the case when $n$ is even, the proof actually gives a finitely-generated subgroup $\Gamma \subset \Diff^\infty_0(S^{n-1})$ which has no realization in $\Diff^1_0(\D^n)$, we give a sketch below.  

\begin{proof}[Proof sketch -- even dimensional case] Supposing $n=2k$, identify $S^{n-1}$ with the unit sphere $\{(z_1, ... z_k) \in \C^k \mid \sum |z_i|^2 = 1\}$.   Let $\mu_1, \ldots, \mu_k$ be distinct $p^{th}$ roots of unity for some prime $p>n$, and define an order $p$ diffeomorphism $f: (z_1, \ldots, z_k) \mapsto (w_1 z_1, \ldots, w_k z_k)$.  One can explicitly write $f$ as a product of commutators $f = [a_1, b_1] \ldots [a_j, b_j]$ such that $a_i$ and $b_i$ all commute with $f$.    Let $\Gamma = \langle f \rangle \times \langle a_1,\ldots, b_j \rangle$.   Supposing that $\phi$ is an extension, $\phi(f)$ is a finite order element of $\Diff^1_0(\D^n)$ with no fixed points on $S^{n-1}$ hence has an interior fixed point (by Brouwer).  Smith theory says that the action of $\phi(f)$ on the doubled disc (i.e. $S^n$) has fixed set equal to a mod-$p$ homology sphere; since this fixed set is disconnected, it must be a $0$-sphere.   One concludes that $\phi(f)$ has a single fixed point, say $x \in \D^n$, preserved by $\phi(a_i)$ and $\phi(b_i)$ since these commute with $\phi(f)$.   Since $x$ is an isolated fixed point, the derivative $D\phi(f)_x$ is a linear map of order $p$ and full rank.   However, the centralizer of such a map in $\GL(n,\R)$ is abelian, so $D\phi(f)_x = \prod[D\phi(a_i)_x, D\phi(b_i)_x] = 1$, contradiction.   
\end{proof}

The tension here comes from forcing a global fixed point (through Smith theory and the algebraic structure of $\Gamma$), then using the constraints from the algebraic structure of $\Gamma$ on the linear representation given by taking derivatives at that point.    It would be interesting to see if this line of argument can be improved to even dimensional spheres -- because there is no fixed-point free finite order map of $S^{2k}$, Ghys' argument in this case is more involved and uses simplicity of the full diffeomorphism group.   Torsion elements also play an essential role in the argument, so the following question remains open in all cases except $n=2$. 

\begin{qu} \label{Q:SphereSection}
Does there exist a torsion-free, finitely-generated $G \subset \Diff_0(S^{n-1})$ with no section to $\Diff_0(\D^n)$?    
\end{qu}

The $n=2$ is covered in \cite{Mann13}, and parallels the strategy in Ghys' proof above.  \emph{Distorted elements} in $G$ are used as a substitute for torsion elements, and work of Franks and Handel on distorted surface diffeomorphisms is used as a substitute for Smith theory to force an isolated fixed point.   To complete the second half of the argument, one needs an element with nontrivial, finite-order, derivative at the fixed point; this is accomplished using rotation numbers.   

In the absence of a nontrivial linear representation, \emph{Thurston's stability theorem} gives an alternative -- and more powerful -- tool whenever one has a $C^1$ action with global fixed point.  

\begin{thm}[Thurston stability \cite{Thurston}] \label{thurston stab} 
Let $M$ be a manifold, $x \in M$, and let $G$ be finitely-generated group of germs of $C^1$ diffeomorphisms fixing $x$ and with trivial derivative at $x$.  Then there is a surjective homomorphism $G \to \Z$.  
\end{thm}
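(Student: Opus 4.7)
The plan is to follow the classical rescaling argument: construct a nontrivial homomorphism $G\to\R^n$ by looking at how generators of $G$ move a carefully chosen sequence $y_m\to x$, and then extract a surjection to $\Z$. Work in a local chart so that $x=0\in\R^n$ and write each group element $g\in G$ as $g(y)=y+\phi_g(y)$. The hypothesis that $Dg(0)=\id$ translates to $\phi_g(0)=0$ and $D\phi_g(0)=0$, so each $\phi_g$ is $C^1$ with derivative vanishing at the origin. Fix a finite symmetric generating set $S\subset G$.

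Next, I would choose a sequence $y_m\to 0$ along with scales $\lambda_m:=\max_{g\in S}\|\phi_g(y_m)\|>0$ and pass to a subsequence so that, for every $g_i\in S$, the normalized displacement $v_{g_i}:=\lim_{m\to\infty}\phi_{g_i}(y_m)/\lambda_m\in\R^n$ exists; by construction $\max_i\|v_{g_i}\|=1$, so not all $v_{g_i}$ vanish. The key calculation is a cocycle identity for products. Given $g,h\in G$, write
\[
\phi_{gh}(y_m)=\phi_g\bigl(y_m+\phi_h(y_m)\bigr)+\phi_h(y_m).
\]
Because $D\phi_g$ is continuous and vanishes at $0$, the mean value theorem gives $\|\phi_g(y_m+\phi_h(y_m))-\phi_g(y_m)\|\le \varepsilon_m\|\phi_h(y_m)\|$ with $\varepsilon_m\to 0$, so after dividing by $\lambda_m$ the cross term is $o(1)$ and we obtain $\phi_{gh}(y_m)/\lambda_m\to v_g+v_h$. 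Applied to generators, this shows the assignment $g\mapsto v_g$ extends consistently from $S$ to the whole group via $v_{g_{i_1}\cdots g_{i_k}}=v_{g_{i_1}}+\cdots+v_{g_{i_k}}$, and in particular respects the relations of $G$. The resulting map $\rho:G\to\R^n$ is a nontrivial homomorphism. Since $\rho(G)$ is a finitely generated torsion-free abelian subgroup of $\R^n$, it is isomorphic to $\Z^k$ for some $k\ge 1$, and composing with projection to any nontrivial coordinate yields the desired surjection $G\twoheadrightarrow\Z$.

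The subtle step, and the one I expect to require the most care, is the choice of sequence $y_m$ and the uniform control needed to ensure that the rescaled limit $v_g$ exists simultaneously for all $g\in G$ rather than merely on the generating set. A diagonal argument works here: choose $y_m$ so that $\lambda_m$ is comparable (up to a fixed multiplicative constant) to the maximal displacement over the ball of radius $m$ in the Cayley graph of $G$; the cocycle estimate then guarantees the limits on words of bounded length exist and are compatible with the group law, and the Arzel\`a--Ascoli-style diagonal extraction produces a single subsequence that works for every word. One must also verify that $\lambda_m>0$ can always be arranged: otherwise all generators would agree with the identity on a sequence accumulating at $0$, so their germs would be trivial, contradicting that $G$ is nontrivial (and if $G$ is trivial, the theorem is vacuous).
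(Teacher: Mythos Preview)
The paper does not actually prove this theorem; it is quoted from Thurston's original paper, with the remark that ``the proof uses a clever renormalization procedure to compare growth of higher order terms'' and a pointer to an exposition in \cite[\S 6.2]{Foliations}. Your outline \emph{is} that renormalization procedure, and the core of it (the cocycle computation in your second paragraph) is correct.

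Two small corrections to your exposition. First, your final paragraph worries about the wrong thing. You do not need a diagonal argument, nor do you need $\lambda_m$ comparable to the maximum over larger and larger Cayley balls. Once a single subsequence makes $\phi_{g}(y_m)/\lambda_m$ converge for each $g\in S$, the limit for an arbitrary word follows by \emph{induction on word length} from your cocycle estimate: if $\phi_h(y_m)/\lambda_m$ is already known to converge (hence is bounded) for $h$ of length $<k$, then for $g\in S$ the mean value bound gives $\phi_{gh}(y_m)/\lambda_m\to v_g+v_h$, and in particular $\|\phi_{gh}(y_m)\|/\lambda_m$ is bounded. No further extraction is needed. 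Second, your justification that $\lambda_m>0$ is not quite right: having all generators fix a sequence accumulating at $0$ does \emph{not} force their germs to be trivial. The fix is simply to choose the sequence in the first place so that some fixed generator with nontrivial germ moves each $y_m$; then $\lambda_m\ge\|\phi_g(y_m)\|>0$ automatically.
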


The proof uses a clever renormalization procedure to compare growth of higher order terms. A nice exposition can be found in \cite[\S 6.2]{Foliations}.  

\begin{app} \label{app:thurston_typical}
A typical application of Theorem \ref{thurston stab} is as follows: suppose $\Si$ is a surface and $x$ an accumulation point of fixed points for $G \subset \Diff^1(\Si)$.  Then the linear representation of $G$ by taking derivatives at $x$ either gives a homomorphism to $\Z$ via determinant, or via conjugation to the unipotent subgroup $\left( \begin{smallmatrix} 1 & \ast \\ 0 & 1 \end{smallmatrix} \right)$, or is trivial.  In the last case, Thurston stability produces a homomorphism to $\Z$.   
Thus, to apply this reasoning to a non-realization problem in $\Diff^1(\Si)$, one aims to force the existence of accumulation points of fixed points for a finitely-generated subgroup $G$ with $H^1(G; \Q) = 0$.      
\end{app}

As an illustrative example, here is a special case of a result of Salter-Tshishiku.  

\begin{thm}[Salter-Tshishiku  \cite{ST}]\label{thm:st}
Let $n \geq 5$.  There is no realization of $\Diff^1(\D^2, \mathbf{z}_n) \to B_n$. 
\end{thm}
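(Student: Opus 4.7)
The plan is to apply Thurston stability (Theorem~\ref{thurston stab}) to the commutator subgroup $G := [B_n, B_n]$, which by a theorem of Gorin--Lin is finitely generated and perfect for $n \geq 5$. Suppose for contradiction that a realization $\tilde\rho: B_n \to \Diff^1_\partial(\D^2, \mathbf{z}_n)$ exists, and let $F \subseteq \D^2$ be the common fixed-point set of $\tilde\rho(G)$. Since every $\tilde\rho(g)$ fixes $\partial\D^2$ pointwise, $\partial\D^2 \subseteq F$. I will show $F = \D^2$, which forces $\tilde\rho|_G$ to be trivial, contradicting that $\tilde\rho$ is a section over the nontrivial group $G$.

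The key lemma is: if $p \in F$ and fixed points of $\tilde\rho(G)$ accumulate at $p$ along at least one direction $v$, then $D\tilde\rho(g)_p = I$ for every $g \in G$. Indeed, $v$ is a $+1$-eigenvector of every $D\tilde\rho(g)_p$, and orientation-preservation puts $D\tilde\rho(g)_p$ in the form $\bigl(\begin{smallmatrix}1 & b \\ 0 & d\end{smallmatrix}\bigr)$ with $d > 0$ in a basis starting with $v$; this is the ``$ax+b$'' group, solvable with abelian commutator subgroup, so its perfect subgroups are trivial. With $D\tilde\rho(g)_p = I$, Thurston stability applied to the image $\bar G$ of $G$ in the group of $C^1$ germs at $p$ yields a dichotomy: either $\bar G$ is nontrivial and hence surjects onto $\Z$, producing a surjection $G \to \Z$ that contradicts perfectness of $G$; or $\bar G$ is trivial, meaning each generator of $G$ is the identity on some neighborhood of $p$ and (using finite generation, together with the fact that the set of elements acting as the identity on a fixed open set forms a group) a common such neighborhood is contained in $F$, so $p \in \mathrm{int}(F)$. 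At a boundary point $p \in \partial\D^2$, triviality of $D\tilde\rho(g)_p$ permits a $C^1$ extension by reflection across $\partial\D^2$, bringing us to Thurston stability as stated for manifolds without boundary.

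Running the dichotomy first at each $p \in \partial\D^2$ (where the eigendirection $v$ tangent to $\partial\D^2$ is forced by the boundary-fixing condition) shows $\partial\D^2 \subseteq \mathrm{int}(F)$. If $F \neq \D^2$, then $\partial(\mathrm{int}(F)) \cap \mathrm{int}(\D^2)$ is nonempty; at any such $p'$, the lemma applies because $\mathrm{int}(F)$ supplies an accumulation direction, and the dichotomy either contradicts perfectness of $G$ or places $p'$ in $\mathrm{int}(F)$ -- contradicting the choice of $p'$. Thus $F = \D^2$, completing the contradiction. I expect the main technical hurdles to be verifying the $C^1$ reflection extension at boundary points and checking the accumulation-direction analysis at $\partial(\mathrm{int}(F))$ points in the interior (where the accumulation may be only one-dimensional); once these are in hand the Thurston-stability/perfectness dichotomy closes cleanly, and the whole argument reduces the theorem to the algebraic fact that $[B_n, B_n]$ is perfect for $n \geq 5$.
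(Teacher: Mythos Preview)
Your proposal is correct and takes essentially the same approach as the paper: both apply Thurston stability to the commutator subgroup $G=[B_n,B_n]$ (finitely generated and perfect for $n\ge 5$ by Gorin--Lin), using that $\partial\D^2\subset\fix(\phi(G))$ to locate a point where fixed points accumulate, forcing the derivative representation into a solvable group and hence to be trivial on the perfect group $G$. The paper's version is terser---it simply picks a frontier point of $\fix(\phi(G))$ and invokes Application~\ref{app:thurston_typical}---while your open-closed argument and boundary-reflection step unpack the same reasoning with more care (in particular, your dichotomy explicitly handles the trivial-germ branch that the paper leaves implicit).
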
 

\begin{proof} 
Suppose $\phi: B_n \to \Diff^1(\D^2, \mathbf{z}_n)$ is a realization.  The commutator subgroup of $B_n$ is finitely generated, and by a theorem of Gorin--Lin, is perfect provided that $n \geq 5$.    Let $x$ be a point in the frontier of $\fix(\phi([B_n, B_n]))$, which is nonempty since $\phi(B_n)$ pointwise fixes $\partial \D^2$.  Now use the argument from Application \ref{app:thurston_typical} to derive a contradiction.  
\end{proof} 

Salter--Tshishiku actually prove the stronger result that, for any surface $\Si_{g,n}^b$ (possibly with $b=0$)
the surface braid subgroup of $\Mod(\Si_{g,n}^b)$ is not realized by $C^1$ diffeomorphisms whenever $n \geq 6$, then use the same technique to re-prove Morita's non-lifting Theorem \ref{thm:morita} in class $C^1$ for surfaces of genus $g \geq 2$.  
The idea is as follows:  Supposing that a realization $\Mod_g \to \Diff^1(\Sigma)$ exists, let $f \in \Mod_g$ be the hypereliptic involution.  The Lefschetz fixed point theorem implies that $\phi(f)$ has $2g+2$ fixed points; these are permuted by the centralizer of $f$.   Using Dehn twists about explicit curves, one produces a subgroup of the centralizer isomorphic to a quotient of $B_{2g+2}$, acting with a global fixed point, and applies a Thurston stability argument to derive a contradiction.  

We will show in \S\ref{sec:positive} that $B_3$ can be realized by (smooth) diffeomorphisms of a disc with three marked points; however, the following are open. 

\begin{qu}
Can $B_4$ be realized by diffeomorphisms?  
\end{qu} 

\begin{qu}
For $n>3$, can $B_n$ be realized by homeomorphisms of the marked disc?  
\end{qu} 

\begin{qu}
The argument of \cite{ST} does not generalize to finite index subgroups.  For $n \ge 4$, is there a finite index subgroup of $B_n$ that \emph{can} be realized by diffeomorphisms?  What about by homeomorphisms?    As a particular case, what about the {\em pure braid group} $P_n:=\ker[B_n\ra S_n]$ ?
\end{qu}


\subsection{Franks--Handel nonlifting} \label{sec:Franks-Handel}
Franks and Handel have a different proof of a general version of Morita's nonlifting Theorem \ref{thm:morita} for genus $\ge3$. 

\begin{thm}[Franks--Handel \cite{FH}]\label{thm:franks-handel}
Let $f \in \Mod_g$ preserve a genus $h \geq 1$ subsurface $S$, and be pseudo-Anosov on $S$ and isotopic to the identity on $\Sigma_g \setminus S$.  Let $H \cong \Mod(\Sigma_g \setminus S) \subset \Mod_g$ be the subgroup of mapping classes of diffeomorphisms pointwise fixing $S$; this centralizes $f$.  Provided that $\Sigma_g \setminus S$ has genus at least 2, $\langle f, H \rangle$ cannot be realized in $\Diff^1(\Sigma_g)$.  
\end{thm}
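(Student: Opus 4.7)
The plan is to suppose $\phi \colon \langle f, H\rangle \to \Diff^1(\Sigma_g)$ is a realization and derive a contradiction along the lines sketched in Application \ref{app:thurston_typical}: locate a common fixed point for $\phi(f)$ and a finitely-generated perfect subgroup of $\phi(H)$, run a Thurston-stability argument at that point, and propagate the resulting local triviality to a global contradiction.

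First I would locate a fixed point of $\phi(f)$ inside $S$. Since the mapping class of $f$ is pseudo-Anosov on $S$, its Nielsen number on $S$ is positive, so every $C^1$ representative carries essential periodic points in $\interior(S)$. Replacing $f$ by a power $f^n$ (it suffices to rule out realizations of $\langle f^n, H\rangle$, a subgroup of $\langle f, H\rangle$), I can assume $\phi(f)$ has a fixed point $p \in \interior(S)$. Since $\phi(H)$ centralizes $\phi(f)$, it permutes the finite set of essential fixed points in the Nielsen class of $p$, so a finite-index subgroup $H_0 \subseteq H$ fixes $p$. Using that $\Sigma_g \setminus S$ has genus at least $2$, I would extract inside $H_0$ a finitely-generated perfect subgroup $K$ consisting of nontrivial mapping classes (e.g.\ a deep enough term of the lower central series of $[H_0,H_0]$, or a subgroup generated by commutators of Dehn twists).

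The key step is the derivative analysis at $p$. The matrix $D\phi(f)_p$ commutes with the linear representation $D_p \colon K \to \GL_2(\R)$. The pseudo-Anosov hypothesis, combined with a local Lefschetz-index calculation at the essential fixed point $p$, forces $D\phi(f)_p$ to be non-scalar, so its centralizer in $\GL_2(\R)$ is abelian. Since $K$ is perfect, $D_p$ must be trivial on $K$. Thurston stability, as in Application \ref{app:thurston_typical}, then shows that the germ of $\phi(K)$ at $p$ either surjects onto $\Z$ --- impossible since $K$ is perfect --- or is trivial. Hence every element of $\phi(K)$ restricts to the identity on some neighborhood of $p$.

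The main obstacle will be turning this \emph{local} triviality into a \emph{global} contradiction. My plan is to exploit the density of periodic points of the pseudo-Anosov: running the same argument at essential periodic orbits of increasing period of $\phi(f)$ should yield trivial germs for $\phi(K)$ at a dense subset of $S$. Making this uniform --- choosing a single perfect subgroup whose image under $D_{(\cdot)}$ is trivial at all of these fixed points simultaneously, and then using continuity to conclude that $\phi(K)$ is globally trivial --- is the principal technical step. Once $\phi(K) = \{\id\}$ is established, this contradicts the nontriviality of the mapping classes in $K$, completing the argument.
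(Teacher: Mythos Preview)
Your approach is quite different from the paper's, and it has a real gap at the very first dynamical step.  You assert that $\phi(H)$ ``permutes the finite set of essential fixed points in the Nielsen class of $p$, so a finite-index subgroup $H_0 \subseteq H$ fixes $p$.''  But the fixed points of $\phi(f)$ in a given Nielsen class need not be finite: this is a feature of the \emph{pseudo-Anosov representative} $a$ of $f$, not of an arbitrary $C^1$ diffeomorphism $\phi(f)$ in its isotopy class.  What you get from commutation is only that $\phi(H)$ permutes the (finitely many) Nielsen \emph{classes}, hence a finite-index $H_0$ preserves each class setwise; this does not hand you a common fixed point.  The same confusion infects your Step~4: there is no reason $D\phi(f)_p$ should be non-scalar --- again, the local linear hyperbolicity you have in mind belongs to $a$, not to $\phi(f)$.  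The Lefschetz index of the class constrains nothing about the derivative at one particular (possibly non-isolated) fixed point.  Finally, even granting a perfect $K$ with trivial germ at each periodic orbit, your propagation step would require the \emph{same} $K$ at every orbit; since passing to $H_0$ depends on the orbit and the number of Nielsen classes of $f^n$ grows with $n$, no single finite-index subgroup suffices.

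The paper's proof is engineered precisely to produce a common fixed point for \emph{all} of $H$ (not a finite-index subgroup) without any hypothesis on derivatives.  It lifts everything to $\overline{\H^2}$, where the pseudo-Anosov boundary dynamics of the lift $\tilde f$ give genuine attracting points on $\partial\H^2$ with well-defined basins of attraction $U$; commutation forces $\tilde H$ to preserve $U$, and the theory of prime ends extracts from the frontier of $U$ a single point fixed by all of $\tilde H$ and $\tilde f$.  Varying the periodic Nielsen class of $a$ then yields infinitely many such fixed points for $\phi(H)$ in $\Sigma_g$, which accumulate, and Thurston stability (using $H^1(H;\Z)=0$) finishes the job as in Application~\ref{app:thurston_typical}.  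The basin/prime-end machinery is doing exactly the work your Nielsen-class argument cannot.
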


The proof uses two of the dynamical tools we have discussed: Thurston stability and basins of attraction.  The additional ingredients are a) the theory of \emph{prime ends} -- a compactification method compatible with group actions on open, connected, simply connected domains in the plane -- and b) the Thurston--Nielsen classification of mapping classes and the dynamics of their boundary maps, which we describe next. Notably, the proof does not use torsion in $\Mod(\Si)$. 

\paragraph{The Thurston--Nielsen classification.}
Building on work of Nielsen \cite{Nielsen}, Thurston \cite{Thurston-Nielsen} gave a classification of elements of the mapping class group of a surface.  Each $\phi\in\Mod_g$ has a representative homeomorphism which is either finite order, pseudo-Anosov, or reducible.   A homeomorphism $f$ being \emph{Pseudo-Anosov} means that the surface admits two $f$-invariant transverse measured foliations, whose transverse measures are expanded by $\lambda$ and $1/\lambda$ (respectively), for some $\lambda>0$.    The foliations may have finitely many singularities, each a $p$-pronged saddle, for some $p$.  
\emph{Reducible} means that there is a finite collection $\ca A$ of disjoint simple closed curves invariant under $f$ such that, after passing to a power that fixes each curve, $f$ preserves a small tubular neighborhood $N(\ca{A})$ of $\ca A$, and acts on each connected component of $\Si \setminus N(\ca A)$ separately either by the identity, a pseudo-Anosov homeomorphism, or possibly a Dehn twist if the component is an annulus.  The structure of reducible elements is discussed more in \S\ref{sec:raag}.

Any homeomorphism $f$ of $\Si$ can be lifted to a homeomorphism of the universal cover $\tilde \Si \cong \H^2$.  The lift is a \emph{quasi-isometry} of $\H^2$ so extends to a homeomorphism of the boundary $S^1$ of the Poincar\'e disc.  Thinking of the mapping class group as the group of outer automorphisms of $\pi_1(\Si)$; a choice of lift of $f$ fixes a choice of representative \emph{automorphism} corresponding to the mapping class of $f$, and the action of the lift on $S^1$ agrees with the action of the automorphism on the Gromov boundary of $\pi_1(\Si)$.    Thus, if $f$ and $g$ are isotopic, two isotopic lifts determine the same boundary action; and the Thurston--Nielsen classification above gives a helpful way to understand the boundary action of mapping classes, as will be used in the proof below.  
Thurston's survey \cite{Thurston-Nielsen} is a good introduction to this theory; more details can be found in \cite[\S13]{farb-margalit} and \cite{FLP}.

Using this machinery, we now sketch the proof of Franks and Handel's theorem.  Their exposition is focused on the dynamics of homeomorphisms of the disc with fixed points; we focus on the dynamics of pseudo-Anosov mapping classes applicable to this situation.   

\begin{proof}[Proof sketch of Theorem \ref{thm:franks-handel}]
Suppose for contradiction that $\phi: G = \langle f, H \rangle \to \Diff^1(\Sigma_g)$ is a realization.   
One may check directly from a presentation of $H$ that $H^1(H; \Z) = 0$, so the goal is to find an accumulation point of global fixed points for $\phi(H)$ and apply Thurston stability.  

We first lift $\phi$ to an action on the universal cover, and study its action by homeomorphisms on the compactification of $\widetilde{\Sigma_g} \cong \H^2$ as $\D^2$.    Let $a$ be a pseudo-Anosov representative of $f$ on $S$.  After passing to a power of $a$ (and $f$) if needed, we may assume $a$ fixes a point $x \in S$, and (after a further power) also fixes all leaves through $x$.  Choose a connected component $\widetilde{S}$ of the preimage of $S$ in $\H^2$ and take a lift $\tilde{a}$ fixing $\tilde{x} \in \widetilde{S}$. Then the boundary action of $\tilde{a}$ has a finite, even number of fixed points, alternately attracting and repelling, corresponding to the endpoints of the leaves of the (lifted) foliations that pass through $\tilde{x}$.  If $x$ is a non-singular point, there will be two attractors and two repellers, otherwise, for a $p$-pronged singularity, there are $p$ of each.  Let $\tilde{f}$ be the lift of $\phi(f)$ with boundary action agreeing with $\tilde{a}$; these lifts have the property that $\tilde{f}$ and $\tilde{a}$ are isotopic.  There is also a canonical choice of lift $\tilde{H}$ of $\phi(H)$ pointwise fixing $\partial \widetilde{S} \subset \partial \H^2$.  This will commute with $\tilde{f}$, giving a well-defined lift of $\phi$. 

The next step is to understand the dynamics of the action of $\tilde{f}$ on the interior of the disc.  Although $f$ is only isotopic to $a$, so \emph{a priori} it can have very different dynamics, we claim that the attracting points of $\tilde{f}$ on the boundary really are attractors, in the sense that there exists an open neighborhood $W \subset \D^2$ of each such point $p$ with the property that $\tilde{f}^n(W)$ converges to $p$.   To see this, we use the fact that $f$ is a quasi-isometry of $\D^2$, so there exists $K$ such that the image of each geodesic under $f$ is a $K$-quasi-geodesic.    Also, since $\tilde{f}$ and $\tilde{a}$ are isotopic, there exists $K'$ such that $d(\tilde{f}(x), \tilde{a}(x)) < K'$ (in the hyperbolic distance) for all $x \in \H^2$.    Recall also that $a$ expands length along the singular leaf $L$ ending at $p$ by some $\lambda > 1$.    Let $\gamma$ be a bi-infinite geodesic perpendicular to $a$ passing through $L$ at distance $t$ from $\tilde{x}$.    Then $\tilde{f}(\gamma)$ is a $K$ quasi-geodesic passing through a point at distance $K'$ from a point distance $\lambda t$ from $\tilde{x}$ along $L$, and it is easy to verify that whenever $t$ is sufficiently large, $\tilde{f}(\gamma)$ is contained inside the hemisphere cut out by $\gamma$.   This suffices to show that $p$ is an attracting point.  

Let $U$ denote the basin of attraction of $\tilde{f}$ at $p$ (as defined in Section \ref{sec:germs}).   Since $\tilde{f}$ and $\tilde{H}$ commute, and $H$ fixes $p$, the basin $U$ is $\tilde{H}$-invariant. An easy improvement of this argument shows that, in fact, for every $\tilde{h} \in \tilde{H}$, there exists $n \in N$ such that 
for each attractor or repeller $z$ of $\tilde{f}$ in $\partial \H^2$, the map $\tilde{f}^n\tilde{h}$ has $z$ as an attractor (or repeller) with the same basin at $z$ as $f$.  

  \begin{figure}[h!]
     \labellist 
  \small\hair 2pt
  \pinlabel $U$ at 45 45
   \pinlabel $p$ at 80 -7 
   \pinlabel $y$ at 100 75
   \pinlabel $q$ at 155 35
   \endlabellist
  \centerline{
    \mbox{\includegraphics[width=1.1in]{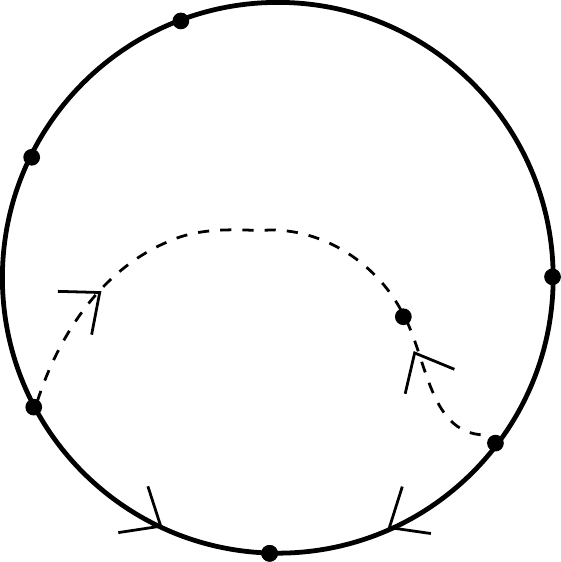}}
    \vspace{10pt} }
\caption{\small Dynamics of $\tilde{f}$ on $\overline{\H^2}$}
\label{pAdisc}
     \end{figure}

Now we find a point $y \in \H^2$ fixed by any such homeomorphism $\tilde{f}^n \circ \tilde{h}$.   As an easy case, suppose $U$ were an open ball with boundary topologically an embedded circle $C$.  Let $y$ be the closest point of $\fix(\tilde{f}^n \circ \tilde{h}) \cap C$ to the right of the repelling point $q$ adjacent to $p$, as indicated in the figure.  Using the fact that $f$ and $H$ commute, one shows this ``first point" for $\tilde{f}^n \circ \tilde{h}$ is actually independent of $h$, and so fixed by each such $\tilde{f}^n\tilde{h}$.  When $h = \id$, we may take $n=0$, so this shows it is also a fixed point of $\tilde{f}$, and hence of $\tilde{H}$ as well.  
In general, $U$ is not necessarily a homeomorphic disc with circle boundary, but one can show that $U$ is open, connected, and simply connected, and thus replace $\partial U$ in the argument above with the \emph{prime end compactification} of $U$ to finish the argument in an analogous manner.  

This gives us a single fixed point for $\phi(H)$ in $\Sigma_g$.   To produce infinitely many more, one repeats the argument above using other powers of $a$.   It is a standard fact (a consequence of Poincar\'e recurrence) that pseudo-Anosov diffeomorphisms have infinitely many periodic points -- in fact, a dense set in the surface.  See \cite[\S14]{farb-margalit}.   These are partitioned into (infinitely many distinct) \emph{Nielsen classes}, where $x$ and $y$ are said to be equivalent if there exists a lift of $a^n$ to $\H^2$ that fixes both $x$ and $y$.   The argument we gave above produced a global fixed point $y$ for $\phi(H)$ in the same Nielsen class as the original periodic point $x$ of $a$.   Thus, starting with points from different Nielsen classes for $a$ will produce infinitely many distinct fixed points for $\phi(H)$, which must accumulate somewhere on $\Sigma$.  This is what we needed to show.  
\end{proof}


\subsection{Obstructions to realizing non-discrete groups: forcing continuity}  \label{sec:continuity}

We return to discussing realization problems for large (non finitely-generated) subgroups under the ``restrict-to-boundary" map. 
Groups of \emph{quasiconformal} homeomorphisms are an interesting case.  Quasiconformal mappings are those with ``bounded deviation'' from conformal maps: as a special case, if $f$ is a differentiable map defined on an open domain $U \subset \C$, $f$ is quasiconformal if $\sup_{p \in U} \frac{|f_z(p)| + |f_{\bar z}(p)|}{|f_z(p)| - |f_{\bar z}(p)|}$ is finite.  There are other formulations of the definition which apply to homeomorphisms of arbitrary metric spaces.   The pseudo-Anosov maps from the Thurston--Nielsen classification above give examples of quasiconformal homeomorphisms between Riemann surfaces; indeed one approach to proving the classification (due to Bers \cite{Bers}) is through the theory of quasiconformal maps.  

A quasiconformal homeomorphism of a surface with boundary restricts to what is called a {\em quasi-symmetric} homeomorphism of $S^1$.  Thus, there is a natural surjection $\QC(\D^2) \to \QS(S^1)$.  The section problem for this map was answered negatively by Epstein and Markovic.  

\begin{thm}[Epstein--Markovic \cite{EM}] \label{thm:EM}
There is no section of the restriction map $\QC(\D^2) \to \QS(S^1)$.  
\end{thm}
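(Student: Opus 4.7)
The plan is to argue by contradiction: suppose $s : \QS(S^1) \to \QC(\D^2)$ is a homomorphism satisfying $s(f)|_{S^1} = f$, and derive an inconsistency using the interplay between the Möbius subgroup and a carefully chosen family of quasi-symmetric maps. The overall strategy parallels the ``force a structure, then destroy it'' template of Section \ref{sec:dynamical}: use the algebraic structure of $\QS(S^1)$ to constrain $s$ on a well-understood subgroup, and then produce dynamical behavior on the complement that no quasiconformal homeomorphism can accommodate.

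\textbf{Step 1: Normalize on the Möbius subgroup.} The Möbius group $G = \PSL(2,\R)$ sits naturally in $\QS(S^1)$ and admits a canonical lift $\iota : G \hookrightarrow \QC(\D^2)$ as hyperbolic isometries. The restriction $s|_G : G \to \QC(\D^2)$ is a homomorphism of a simple Lie group, and I would use rigidity of $\PSL(2,\R)$-representations (together with the fact that $s(g)$ extends $g$ on the boundary) to show that $s|_G$ must be conjugate to $\iota$ by some element of the kernel $\QC_0$ of the restriction map. After replacing $s$ by its conjugate, I may assume $s|_G = \iota$, so that elements of $G$ act on $\D^2$ as genuine hyperbolic isometries.

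\textbf{Step 2: Propagate rigidity to stabilizers.} For a point $p \in S^1$, let $\mathrm{Stab}_{\QS}(p)$ denote the (large) stabilizer of $p$ in $\QS(S^1)$; it contains the parabolic subgroup $P_p \subset G$ fixing $p$. Under $s$, the group $P_p$ acts as the standard parabolic, preserving horocycles based at $p$. For any $f \in \mathrm{Stab}_{\QS}(p)$, the conjugation relation forces $s(f)$ to normalize $s(P_p) = \iota(P_p)$ in $\QC(\D^2)$. Using that the normalizer of a parabolic subgroup inside the group of hyperbolic quasi-isometries is again a rank-one parabolic-type group, I would conclude that $s(f)$ must preserve the foliation by horocycles at $p$, and in fact its action along these horocycles is controlled by $f$'s boundary behavior at $p$.

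\textbf{Step 3: A degenerating family yields the contradiction.} The heart of the argument is the construction of an explicit family of quasi-symmetric homeomorphisms that witnesses the incompatibility. I would construct a one-parameter family $\{f_t\}_{t > 0} \subset \mathrm{Stab}_{\QS}(p) \cap \mathrm{Stab}_{\QS}(q)$ of bounded QS-constant, together with hyperbolic elements $g_t \in G$ with translation length tending to $\infty$ along the geodesic joining $p$ to $q$, such that the conjugates $g_t^{-1} f_t g_t$ have uniformly bounded QS-constant but the QC-dilatation of any quasiconformal extension must blow up. The group law then forces $s(g_t^{-1} f_t g_t) = \iota(g_t)^{-1} s(f_t) \iota(g_t)$; since $\iota(g_t)$ is an isometry, this is a conjugate of a bounded QC-family by isometries and thus has bounded dilatation, contradicting the forced blow-up.

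\textbf{Main obstacle.} The hardest and most delicate part is Step 3: producing a QS family whose Möbius conjugates have incompatible QC-extension behavior requires fine control of how the quasiconformal dilatation of the ``least distorted'' extension of $g_t^{-1} f_t g_t$ degenerates under the pinching action of $g_t$. A key technical input should be the Beurling--Ahlfors theory, which ties the QS-constant of a boundary map to the lowest possible QC-dilatation of an extension, together with a scaling analysis showing that conjugation by increasingly hyperbolic Möbius transformations concentrates the boundary distortion near $p$ in a way that no uniformly QC extension can resolve. Making this scaling rigorous, and ensuring the constructed family $f_t$ genuinely lies in the common stabilizer so that Step 2 applies, is where the real work resides.
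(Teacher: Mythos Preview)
The paper does not actually prove Theorem~\ref{thm:EM}; it states the result, cites \cite{EM}, and remarks only that ``a major step in Epstein and Markovic's proof \ldots\ is to show that any such realization would be forced to be \emph{continuous}.''  So there is no proof here to compare yours against line-by-line, but the one hint given---automatic continuity of a putative section---is already quite different in flavor from your outline, and your outline has a genuine gap.

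The fatal problem is Step~3.  You propose to build $f_t$ with bounded QS-constant such that the conjugates $g_t^{-1} f_t g_t$ have bounded QS-constant but every QC extension has dilatation blowing up.  These two assertions are mutually incompatible: Beurling--Ahlfors (or Douady--Earle) says precisely that a bounded QS-constant guarantees a QC extension with dilatation bounded in terms of that constant.  More structurally, the entire conjugation mechanism cannot create the tension you want.  M\"obius transformations are conformal on $\D^2$ and $1$-quasisymmetric on $S^1$, so conjugation by $g_t \in \PSL(2,\R)$ preserves \emph{both} the boundary QS-constant \emph{and} the interior QC-dilatation exactly.  Thus $s(g_t^{-1} f_t g_t)=\iota(g_t)^{-1}s(f_t)\iota(g_t)$ having bounded dilatation is not in contradiction with anything: \emph{any} QC extension of $g_t^{-1} f_t g_t$ obtained by conjugating an extension of $f_t$ has the same dilatation as the original.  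There is simply no mismatch to exploit along this axis.

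Step~1 is also more delicate than you indicate.  Showing that an arbitrary group-theoretic section of $\PSL(2,\R)$ into $\QC(\D^2)$ (with prescribed boundary values) is conjugate to the standard one is not a consequence of any off-the-shelf rigidity theorem; $\QC(\D^2)$ is not a Lie group, and the kernel $\QC_0$ is enormous.  This is closely related to the continuity step that the paper flags as the substantive content of Epstein--Markovic's argument, and it is not something one can wave away.
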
 

As explained in \cite{EM} (and perhaps part of their original motivation) there is a nice connection between this problem and the Nielsen realization problem for $\Mod_g$.  If $\QC(\D^2) \to \QS(S^1)$ had a section {\em with the additional property} that the extension over $\PSL(2,\R)$ were the identity, then the mapping class group $\Mod(\Sigma_g)$ could be realized by homeomorphisms, as follows:   Let $G$ be the group of all boundary mappings of lifts of homeomorphisms of $\Sigma_g$ to $\D^2 = \overline{\H^2}$.  Since the representative diffeomorphisms from the Thurston--Nielsen classification of mapping-classes are quasiconformal, these boundary maps are all quasi-symmetric. 
There is an exact sequence $\pi_1(\Sigma_g) \to G \to \Mod(\Sigma_g)$.  Assuming a section as above, let $\phi$ be a lift of $G$ to $\QC(\D^2)$; then $\pi_1(\Sigma_g)$ acts by M\"obius transformations.   The quotient of $\phi(G)$ by $\phi(\pi_1(\Sigma_g))$ then gives a realization of $\Mod(\Sigma_g)$ in the group of quasiconformal homeomorphisms of $\Sigma_g$.

A major step in Epstein and Markovic's proof of Theorem \ref{thm:EM} is to show that any such realization would be forced to be {\em continuous}, and so they ask whether every section of the boundary map $\Homeo(\D^2) \to \Homeo(S^1)$ is necessarily continuous.  
Note that at least one section exists, namely from ``coning off" the circle to the disc.   
More generally, one can ask the following.

\begin{qu} Let $M$ be a manifold with boundary, fix $r>0$, and suppose that $\Diff^r_0(M) \to \Diff^r_0(\partial M)$ admits a section.  Is that section necessarily continuous?  
\end{qu}

Only two results are known in this direction, one in $C^\infty$ and one in $C^0$ regularity.  Both prove something much stronger than the answer to the question above, but both use their respective assumptions on regularity in an essential way. 

\begin{thm}[Hurtado \cite{Hurtado}] \label{thm:Hurtado}
Let $M$ and $N$ be closed manifolds, and $\phi: \Diff^\infty_0(M) \to \Diff^\infty_0(N)$ any homomorphism.  Then $\phi$ is continuous.  
\end{thm}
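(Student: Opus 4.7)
The plan is to prove continuity at the identity, which suffices since both source and target are topological groups: if $g_n \to g$ in $\Diff^\infty_0(M)$, then $g^{-1} g_n \to \id$, so continuity at $\id$ gives $\phi(g)^{-1}\phi(g_n) \to \id$. The overall strategy combines the algebraic abundance of disjointly-supported diffeomorphisms in $\Diff^\infty_0(M)$ with the completeness of the $C^\infty$ topology, so that any putative discontinuity would force $\phi$ to produce non-smooth output.

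First I would reduce to subgroups supported in small balls, via a fragmentation argument. There exists a finite open cover $\{B_1,\ldots,B_k\}$ of $M$ by balls and a neighborhood $U$ of $\id$ in $\Diff^\infty_0(M)$ such that every $f\in U$ admits a decomposition $f=f_1\cdots f_k$ with $\supp(f_j)\subset B_j$, and where $f_j \to \id$ whenever $f \to \id$ (this can be arranged using a smooth partition-of-unity trick \emph{\`a la} Palais). So it suffices to show that the restriction of $\phi$ to each subgroup $\Diff^\infty_c(B)$ of compactly-supported diffeomorphisms in a single ball is continuous at $\id$.

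Next I would exploit \emph{displaceability}: since $M$ is closed and $B$ is a small ball, one can produce a sequence of pairwise commuting $h_n\in\Diff^\infty_0(M)$ such that the translates $h_n(B)$ are pairwise disjoint and accumulate on a single limit set. Assume for contradiction that there is a sequence $g_n\to\id$ in $\Diff^\infty_c(B)$ with $\phi(g_n)\not\to\id$. Passing to a subsequence converging to $\id$ extremely rapidly in every $C^r$ seminorm, form the disjointly-supported family $\tilde g_n:=h_n g_n h_n^{-1}$. Rapid decay plus disjoint supports guarantees that, for every subset $A\subset\N$, the infinite product $F_A:=\prod_{n\in A}\tilde g_n$ converges in $\Diff^\infty_0(M)$, producing an injective continuous homomorphism $\Psi:\{0,1\}^\N\to\Diff^\infty_0(M)$ from the Cantor group.

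The hard part is closing the argument. Composing with $\phi$ gives a homomorphism $\phi\circ\Psi:\{0,1\}^\N\to\Diff^\infty_0(N)$ whose image consists of pairwise commuting diffeomorphisms $\phi(\tilde g_n)$ that remain bounded away from $\id$ by assumption. The contradiction must come from the fact that a separable Polish group cannot support uncountably many independent large commuting elements with all formal infinite products still converging to genuine smooth diffeomorphisms; making this quantitative is the technical heart of the proof, and in Hurtado's original paper is achieved via Rosendal's automatic-continuity machinery, verifying that $\Diff^\infty_0(M)$ has the Steinhaus property (symmetric subsets with countable translate-cover have a power containing an identity neighborhood). Finally, I would emphasize that smoothness is essential throughout: the convergence of the $F_A$ in every $C^r$ seminorm relies on faster-than-polynomial decay of $g_n$ in each derivative, which is why the analogous statements in finite regularity, or for $\Homeo_0$, require fundamentally different techniques.
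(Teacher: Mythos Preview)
The paper does not give a proof of this theorem; it is quoted as a result of Hurtado \cite{Hurtado} in a survey section, so there is no ``paper's own proof'' to compare against.

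That said, your outline is in the right spirit for the first two-thirds: Hurtado's argument does begin with fragmentation into small balls and then exploits displaceability to pass to disjointly supported conjugates and form infinite products. Where your sketch goes off the rails is the closing mechanism. Hurtado does \emph{not} verify a Steinhaus property or invoke Rosendal's automatic-continuity framework; that machinery is what underlies the $\Homeo_0$ result stated immediately afterward in the survey (Mann's theorem), and you have conflated the two. Hurtado's actual endgame is analytic: one shows, via an Arzel\`a--Ascoli argument, that the images $\phi(g_n)$ of a sequence $g_n\to\id$ have a subsequence converging in $C^0$ to some diffeomorphism $h$ of $N$, and then uses the disjoint-support/infinite-product construction together with simplicity of $\Diff^\infty_0(M)$ to force $h=\id$, upgrading afterwards from $C^0$ to $C^\infty$ convergence. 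The uncountable Cantor-group embedding you describe is not the contradiction Hurtado derives, and ``a separable Polish group cannot support uncountably many independent large commuting elements'' is not a statement that does the work here (indeed $\Diff^\infty_0(N)$ is not known to have automatic continuity in Rosendal's sense). So your proposal has the right scaffolding but misidentifies the key technical input; as written it is a gap rather than an alternative proof.
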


\begin{thm}[\cite{Mann16}]
Let $M$ be a closed manifold and $G$ a separable topological group.  Any homomorphism $\phi: \Homeo_0(M) \to G$ is continuous.  
\end{thm}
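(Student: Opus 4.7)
The plan is to prove continuity at the identity via a Steinhaus-type automatic continuity argument, reducing the global problem on $\Homeo_0(M)$ to a local one on homeomorphism groups of balls. Since $\phi$ is a group homomorphism between topological groups, continuity at $\id$ implies continuity everywhere. So fix an open neighborhood $U$ of $\id_G \in G$, and choose a symmetric open $V \subset G$ with $V^{2k} \subset U$, for an integer $k$ to be determined. Because $G$ is separable, countably many right translates $V g_n$ cover $G$; hence countably many left translates of the symmetric set $W := \phi^{-1}(V)\phi^{-1}(V)^{-1}$ cover $\Homeo_0(M)$. Our goal reduces to establishing the following \emph{Steinhaus property}: there exists a fixed integer $k$ such that whenever $W \subset \Homeo_0(M)$ is symmetric and covered by countably many left translates, $W^k$ contains a $C^0$-neighborhood of $\id$. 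Indeed, then $W^k \subset \phi^{-1}(V^{2k}) \subset \phi^{-1}(U)$ gives continuity at $\id$.

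Next I would use the Edwards--Kirby fragmentation theorem: there is a finite cover $\{B_1, \ldots, B_\ell\}$ of $M$ by open balls and a $C^0$-neighborhood $\mathcal{N}$ of $\id$ in $\Homeo_0(M)$ such that every $f \in \mathcal{N}$ factors as $f = f_1 \cdots f_\ell$ with $\supp(f_i) \subset B_i$ and each $f_i$ itself small in the $C^0$ metric. This reduces the Steinhaus property for $\Homeo_0(M)$ (with constant $\ell k_0$) to the same property for each subgroup $\Homeo_c(B_i)$ (with uniform constant $k_0$), since a small enough $f \in \mathcal{N}$ can then be written as a product of at most $\ell k_0$ elements of $W$.

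The main step, and the principal obstacle, is establishing the Steinhaus property for $\Homeo_c(B)$ where $B$ is an open ball identified with $\R^n$. Given a symmetric $W \subset \Homeo_c(B)$ whose countably many translates cover the group, I would employ a commuting-conjugates argument. Choose a sequence of pairwise disjoint sub-balls $B'_n \subset B$, each homeomorphic to $B$, so that the subgroups $\Homeo_c(B'_n)$ pairwise commute. Applying a Baire category argument to the countable product $\prod_n \Homeo_c(B'_n)$, which is a Polish group acting on $B$, one extracts an element $g$ (supported on $\bigsqcup_n B'_n$) with infinitely many commuting conjugates lying in $W$. Using the self-similarity of $\R^n$ (any small homeomorphism of $B$ can be conjugated into an arbitrarily small sub-ball, then expressed by a bounded-length word in such commuting conjugates, via a displacement and fragmentation argument at that smaller scale), one shows $W^k$ contains a $C^0$-neighborhood of $\id$ in $\Homeo_c(B)$ for some universal $k$. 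The hard part is precisely making $k$ uniform — independent of $W$ — since arbitrary $W$ arising as preimages can be quite wild. Once this local Steinhaus property is in hand, the fragmentation step and the initial reduction assemble into the full continuity statement.
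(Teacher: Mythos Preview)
The paper does not actually prove this theorem; it is stated as a citation to \cite{Mann16} in \S\ref{sec:continuity} with no accompanying argument. So there is no ``paper's own proof'' to compare against.

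That said, your outline is essentially the strategy of the cited reference: reduce to the Steinhaus property via separability of $G$, localize using Edwards--Kirby fragmentation, and prove the local Steinhaus property for $\Homeo_c(B)$ by a Baire-category/commuting-conjugates argument exploiting self-similarity of the ball. Two points deserve care. First, your description of the ball step is too schematic to count as a proof: the actual argument does not quite proceed by finding ``an element $g$ with infinitely many commuting conjugates in $W$,'' but rather shows that for some $n$ the restriction $W \cap \Homeo_c(B'_n)$ is already dense enough (a single translate of $W$ contains a comeager set in $\Homeo_c(B'_n)$), and then uses perfectness and a commutator-length estimate to absorb an arbitrary small element of $\Homeo_c(B)$ into a bounded power of $W$. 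Second, the uniformity of $k$ that you flag as ``the hard part'' is not something left open in the reference --- it follows because the commutator-length bound and the number of fragments in Edwards--Kirby depend only on $M$ (and the dimension), not on $W$. You should be explicit that this is where uniform perfectness of $\Homeo_c(\R^n)$ enters.
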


We conclude this section with three additional open questions.  

\begin{qu}[Epstein--Markovic]
Fix $n>2$.  Does the map $\QC(\D^n) \to \Homeo(S^{n-1})$ have a section over its image?  
\end{qu}

\begin{qu}
Let $M$ be a manifold with boundary.  If $\Diff^r_0(M) \to \Diff^r_0(\partial M)$ admits a section, what can one say about the topology of $M$?  Are there examples where the action of $\Diff^r_0(\partial M)$ does not preserve a foliation?   
\end{qu}

For $\Diff_0^\infty$ this question was asked by Ghys \cite{ghys91}, who also asks specifically whether the map admits a section when $M$ is a handlebody.

\section{Positive results}\label{sec:positive}

\subsection{Automorphisms, blow-ups and smoothings}
The mapping class group of the torus $\Mod_{1,1} \cong \Mod_1 \cong \SL(2,\Z)$ is realized by linear diffeomorphisms of $\T^2 := \R^2 / \Z^2$ (with 0 as a marked point). Automorphism groups of translation surfaces are a natural generalization of this example.  

A {\em translation surface} is a surface with the additional structure of a holomorphic 1-form; equivalently, it is a surface obtained by taking a finite collection of polygons in $\R^2$ and identifying parallel sides in pairs by translations. Such a surface inherits a singular Euclidean structure with finitely many cone singularities, each with angle a positive multiple of $2\pi$.  An (orientation preserving) {\em affine automorphism} of a translation surface $X$ is a homeomorphism of $X$ that permutes its singularities and is locally in $\Aff^+(\R^2)$ at all other points.  These form a group, denoted $\Aut^+(X)$.  

While the automorphism group of the square torus includes both $\SL(2,\Z)$ and all translations in $\R^2/\Z^2$, when a translation surface $X$ has at least one singularity, the map $\Aut^+(X) \to \Mod(X)$ is always injective.  By definition $\Aut^+(X)<\Mod(X)$ is realized by homeomorphisms, but not obviously by {\em diffeomorphisms}.  In this section, we discuss techniques that allow one to promote an action by homeomorphisms to a smooth action, and use this to realize affine automorphisms and other groups.

\paragraph{Realizing $\Aut^+(X)$ by diffeomorphisms.} 
Let $X$ be a translation surface with holomorphic 1-form $\omega$.  Assume $X$ is not a flat torus, so has at least one singularity.   Taking the linear parts of the defining affine maps gives a homomorphism $\Aut^+(X) \to \SL(2,\R)$ whose image, denoted $\SL(X, \omega)$, is a discrete subgroup.  This subgroup may be finite covolume or not, but is never cocompact.  (For a gentle introduction, see \cite{Schwartz}.)
We show how to realize $\Aut^+(X)$ by diffeomorphisms, under the simplifying assumption that $\SL(X, \omega)$ is finitely generated.  

In this case, $\SL(X, \omega)$ has a presentation of the form 
\[ \langle a_1, b_1, \ldots, a_g, b_g,  c_1, \ldots, c_m,  d_1, \ldots, d_n  \mid  [a_1, b_1] \cdots [a_g, b_g]c_1 \cdots c_m d_1 \cdots d_n,  (c_i)^{q_i} \rangle \] 
where $q_i \in \N$,  $n \geq 1$ and $g, m \geq 0$. 
Let $p_1, \ldots, p_k$ denote the singularities of $X$.  The stabilizer of $p_i$ is a finite index subgroup of $\SL(X, \omega)$, so finitely generated and of the form above.    The open surface $X \setminus \{p_1, \ldots, p_k\}$ can be compactified by adding a circle of ``straight line directions" at each end, giving a surface $\overline{X}$ with $k$ boundary components, to which the action of $\Aut^+(X)$ extends.  

If $p_1$ has cone angle $2\pi M$, its stabilizer in $\SL(X, \omega)$ acts on the circle of directions by lifts of the linear action of $\SL(2,\R)$ on the circle of rays from the origin in $\R$ (topologically a circle) to the $M$-fold cover of this circle.  Other elements of the stabilizer in $\Aut^+(X)$, if any, act by rotations of order $k/M$, commuting with the lifted linear action.  

This action can be smoothly isotoped to one where, using the presentation above, all the $a_i$, $b_i$, and $d_1, \ldots, d_{n-1}$ act trivially,  and the $c_i$ and $d_n$ lie in $\SO(2)$ (we do not change the action of elements that map to the identity in $\SL(X, \omega)$).   To do this, work within the cyclic extension of $\SL(2,\R)$ by $\Z/M\Z$ in which these elements lie, and isotope $a_i, b_i$, and $d_i$, smoothly towards the identity, while isotoping each $c_i$ (through the space of order $q_i$ elements) to rotations.  The product $[a_1, b_1] ... [a_g, b_g]c_1 ... c_m d_1 ... d_{n-1}$  varies smoothly through the isotopy, so can be taken to define an isotopy of $d_n^{-1}$.   This isotopy of the action allows one to ``cone off" the circle to a disc, extending the action of the stabilizer smoothly over the disc.    A variation of this argument is given in detail in the next example.  

If $\Aut^+(X)$ acts transitively on the singularities, this is enough to determine a smooth action of $\Aut^+(X)$ on a closed surface obtained by gluing discs to each boundary component of $\overline{X}$ (which is topologically the surface $X$).  If not, repeat the procedure for each orbit of the action.  \qed

\paragraph{A realization of $B_3$ by diffeomorphisms.} Thurston observed that the braid group $B_3$ could be realized by homeomorphisms.   Building on our work above, we describe his construction and prove the following.  
\begin{thm} \label{thm:B_3-realize}   There is a section of $\Diff^\infty_\pa (\D, \mathbf{z}_3) \ra \Mod(\Si_{0,3}^1) = B_3$. 
\end{thm}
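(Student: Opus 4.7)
The plan is to realize $B_3$ by building on the classical identifications $B_3 \cong \Mod(\Si_{1}^{1}) \cong \widetilde{\SL(2,\Z)}$, where $\widetilde{\SL(2,\Z)}$ denotes the pull-back of $\SL(2,\Z) \hookrightarrow \SL(2,\R)$ along the universal cover $\widetilde{\SL(2,\R)} \to \SL(2,\R)$; under these identifications the central generator of the kernel $\pi_1(\SL(2,\R)) = \Z$ corresponds to the boundary Dehn twist $T_\pa \in \Mod(\Si_1^1)$. I will first construct a section $B_3 \to \Diff^\infty_\pa(\Si_1^1)$ landing in $\iota$-equivariant diffeomorphisms, where $\iota$ is the restriction to $\Si_1^1$ of the hyperelliptic involution $x \mapsto -x$ on $\T^2 = \R^2/\Z^2$; after removing an $\iota$-invariant disc around $0$, the quotient $\Si_1^1/\iota$ is identified with $(\D, \mathbf{z}_3)$, branched at the three nonzero $2$-torsion points, and any such section descends to $B_3 \to \Diff^\infty_\pa(\D, \mathbf{z}_3)$ inducing the standard isomorphism on mapping class groups.

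The starting point is the linear action of $\SL(2,\Z) \subset \SL(2,\R)$ on $\T^2$, which fixes $0$ and commutes with $\iota = -I$. Taking the oriented blow-up at $0$ yields $\Si_1^1$, and the action extends smoothly, restricting on $\pa \Si_1^1$ (the circle of unit directions at $0$) to the projective circle action of $\SL(2,\R)$ on rays through the origin. This boundary action is nontrivial; the goal is to modify the action in a collar of $\pa \Si_1^1$ so that it becomes the identity on the boundary, at the expense of passing from $\SL(2,\Z)$ to $\widetilde{\SL(2,\Z)} = B_3$.

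Concretely, for $g \in \widetilde{\SL(2,\R)}$ projecting to $A$, fix a representing path $\gamma_g \colon [0,1] \to \SL(2,\R)$ from $I$ to $A$. On a collar $N = S^1 \times [0,1]$ of $\pa \Si_1^1 = S^1 \times \{0\}$, let $\chi \colon [0,1] \to [0,1]$ be a smooth monotone cutoff with $\chi(0)=0$ and $\chi(1)=1$, and define $\phi_g(\theta, t) = (\gamma_g(\chi(t)) \cdot \theta, t)$ using the projective action of $\SL(2,\R)$ on $S^1$. This is smooth, restricts to the identity on $\pa \Si_1^1$, and commutes with $\iota = -I$ (which is central in $\GL(2,\R)$ and hence commutes with every $\gamma_g(s)$). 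Off the collar we extend $\phi_g$ to a diffeomorphism representing the blow-up of $A$, matching the collar formula at the inner edge. Since $\SL(2,\R)$ is a Lie group, the pointwise product $s \mapsto \gamma_g(s)\gamma_h(s)$ represents $gh$ in $\widetilde{\SL(2,\R)}$, so on $N$ one has $\phi_g \circ \phi_h = \phi_{gh}$ for free; restricting to the preimage $\widetilde{\SL(2,\Z)} \cong B_3$ gives the desired homomorphism into $\Diff^\infty_\pa(\Si_1^1)$.

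The hardest part is arranging the extension of $\phi_g$ off the collar so that the composition law holds as an identity of honest diffeomorphisms, not merely up to isotopy; the naive prescription ``$\phi_g = A$ outside $N$'' fails because the linear $A$ does not preserve the level sets of $|x|$, so $A$ can carry a point out of (and back into) the collar in a way that breaks $\phi_g \phi_h = \phi_{gh}$, and the gluing must be done in a chart adapted to the boundary orbit structure rather than to the radial coordinate. Once the construction yields an honest homomorphism, checking that it is a section of $\Diff^\infty_\pa(\Si_1^1) \to \Mod(\Si_1^1) \cong B_3$ is immediate from the five lemma: modulo boundary Dehn twists, $\phi_g$ is isotopic to the blown-up linear $A$, giving the identity on the quotient $\Mod(\T^2, 0) = \SL(2,\Z)$; and taking $g$ to be the loop $s \mapsto R_{2\pi s}$ generating $\pi_1(\SL(2,\R))$ produces a $\phi_g$ which is by construction a single Dehn twist around $\pa \Si_1^1$, i.e., the kernel generator $T_\pa$. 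Descending by $\iota$-equivariance then yields the desired section $B_3 \to \Diff^\infty_\pa(\D, \mathbf{z}_3)$.
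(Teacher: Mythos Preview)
Your outline is close in spirit to the paper's proof --- both start from the linear $\SL(2,\Z)$-action on $\T^2$, pass to the hyperelliptic quotient, and use a collar/annulus interpolation through $\SL(2,\R)$ to trade the nontrivial boundary action for the central $\Z$ in $B_3\cong\widetilde{\SL(2,\Z)}$.  However, there are two genuine gaps.

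\textbf{The descent is not smooth at the branch points.}  You assert that an $\iota$-equivariant diffeomorphism of $\Si_1^1$ descends to an element of $\Diff^\infty_\pa(\D,\mathbf z_3)$.  In fact it only descends to an \emph{orbifold} diffeomorphism: near a $2$-torsion point the quotient map is $z\mapsto w=z^2$, and an odd diffeomorphism $f(z)=Az+O(|z|^3)$ descends to $F(w)=(Az)^2+O(|w|^2)$.  Unless $A$ is conformal (a scalar times a rotation), the quadratic form $(Az)^2$ is \emph{not} a smooth function of $w$ --- it involves $|w|$ through $x^2=\tfrac12(|w|+\mathrm{Re}\,w)$.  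For a generic $A\in\SL(2,\Z)$, e.g.\ $\left(\begin{smallmatrix}1&1\\0&1\end{smallmatrix}\right)$, the descended map is only $C^0$ at the marked points.  This is exactly why the paper does not simply quotient by $\iota$: its step~(2) blows up each cone point, isotopes the boundary-circle action to rotations, and then fills in by rigid discs, which is what produces genuine $C^\infty$ diffeomorphisms of the marked disc.  Your construction needs an analogous local surgery at $p_1,p_2,p_3$; the hyperelliptic quotient alone does not do the job.

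\textbf{The collar homomorphism is not ``for free.''}  You claim $\phi_g\phi_h=\phi_{gh}$ on $N$ because the pointwise product of paths $\gamma_g\gamma_h$ represents $gh$ in $\widetilde{\SL(2,\R)}$.  That is true on the level of homotopy classes, but your formula requires the equality $\gamma_g(s)\gamma_h(s)=\gamma_{gh}(s)$ in $\SL(2,\R)$ for every $s$, which fails for arbitrary representing paths.  The fix is to work with the presentation $B_3=\langle a,b\mid a^2=b^3\rangle$: choose paths $\gamma_a,\gamma_b$ in $\SL(2,\R)$ satisfying $\gamma_a(s)^2=\gamma_b(s)^3$ for all $s$ (this is exactly what the paper does with its $a_t,b_t$), and then extend multiplicatively.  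Once you do that, your acknowledged difficulty of extending off the collar also dissolves: attach the annulus \emph{externally} to $\pa\Si_1^1$ (so the linear action stays on the original surface untouched) rather than trying to carve out an internal collar that the linear maps would not preserve.
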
 

\begin{proof}[Proof of Theorem \ref{thm:B_3-realize}]
Consider the linear action of $\SL(2,\Z)$ on $\T^2=\R^2/\Z^2$. The center $\{\pm1\}$ of $\SL(2,\Z)$ acts on $\T^2$ as the hyperelliptic involution; it has four fixed points $\{0\}\cup\{p_1,p_2,p_3\}$, which $\SL(2,\Z)$ permutes (later we will use that these points are $p_1=(0,\frac{1}{2})$, $p_2=(\frac{1}{2},\frac{1}{2})$, $p_3=(\frac{1}{2},0)$, when viewed in $\R^2$). The point 0 is a global fixed point and $\SL(2,\Z)$ acts on ${\bf p}=\{p_1,p_2,p_3\}$ transitively. The quotient $\PSL(2,\Z)=\SL(2,\Z)/\{\pm1\}$ acts on the quotient $\T^2/\{\pm1\}$, which is a sphere with four cone points, each of angle $\pi$.  (In the language of the previous section, this sphere is a \emph{half translation surface}, or Riemann surface equipped with a \emph{quadratic differential}.) 

Since $0$ is a global fixed point, we can compactify $\T^2/\{\pm1\} \setminus \{0\}$ by adding a circle of directions at $0$, obtaining an action of $\PSL(2,\Z)$ on a disk with 3 marked points $\Si_{0,3}^1=(\D^2,{\bf p})$.   

To relate $B_3$ to $\PSL(2,\Z)$, note that there is an exact sequence $1\ra\Z\ra B_3\ra\PSL_2(\Z)\ra1$. 
On the level of mapping class groups, the projection $B_3\ra\PSL(2,\Z)$ is equal to the map 
\[B_3\simeq\Mod_{0,3}^1\ra\Mod_{0,4}^0\simeq(\Z/2\Z\ti\Z/2\Z)\rtimes\PSL(2,\Z)\ra\PSL(2,\Z)\] induced by the map $\Si_{0,3}^1\ra\Si_{0,4}^0$ that collapses the boundary component of to a point (see \cite[Prop.\ 2.7]{farb-margalit}). On the group level $\PSL(2,\Z)\simeq\Z/2\Z*\Z/3\Z=\langle x,y: x^2=1=y^3\rangle$, and $B_3$ has a presentation $\langle a,b: a^2=b^3\rangle$, with $B_3\ra\PSL(2,\Z)$ the obvious homomorphism. 
Thus, we can view the $\PSL(2,\Z)$ action on $(\D^2,{\bf p})$ as a non-faithful action of $B_3$. 
To obtain a smooth realization of $B_3$, we will (1) isotope the action of $x,y\in\PSL_2(\Z)$ on the $\pa\D^2$ to the identity, preserving the relation $x^2=y^3$ to get an action by homeomorphisms, and (2) glue discs in at singularities, as we did for translation surfaces above, to smooth the action.

For (1), attach an annulus $A\simeq S^1\ti[0,1]$ to $\pa \D^2$ along $S^1 \times \{0\}$, enlarging the disc.  Under suitable parameterization, $a\in B_3$ acts on $\pa \D^2$ by a standard order two rotation, and $b$ by an order 3 projectively linear map.  Let $b_t$, $0 \leq t \leq 1/2$  be a smooth path of conjugates of $b$ through $\PSL(2,\R)$ such that $b_0 = b$, and $b_{1/2} \in \SO(2)$.   Now extend this to a smooth path through $\SO(2)$ for $1/2 \leq t \leq 1$, with $b_{1-\epsilon} = \id$ for all small $\epsilon$.   Let $a_t$ be a smooth path in $\SO(2)$ from $a_0 = a$ to $a_1 = \id$ such that $a_t^2 = b_t^3$ for all $t$.  Defining $a,b$ on $S^1\ti\{t\}\sbs A$ to agree with $a_t,b_t$ gives an extension of the action to a smooth action on the annulus that is identity in a neighborhood of $S^1 \times \{1\}$.  

For (2), as in the argument for translation surfaces, compactify $\D^2 \setminus \mathbf{p}$ by a circle $C_i$ of straight line directions at each $p_i$.  Observe that the action of $\PSL(2,\Z)$ on $C_1\cup C_2\cup C_3$ is the action of $\PSL(2,\Z)$ on the projectivized unit tangent circles at $p_1,p_2,p_3$ (which we may identify using the natural trivialization of $T\D^2$). Under our identification we have
\begin{align}  \label{eq ab}
\begin{split}
& a:C_2\ra C_2 \text{ is the identity, } a: C_1 \to C_3 \text{ agrees with } \left( \begin{smallmatrix} 0 & -1 \\ 1 & 0 \end{smallmatrix} \right) \in \PSL(2, \R),\\
& b: C_i \to C_{i+1} \text{ agrees with } \left( \begin{smallmatrix} 0 & -1 \\ 1 & 1 \end{smallmatrix} \right) \in \PSL(2, \R). 
\end{split}
\end{align}
Glue a disc $D_i$ to each $C_i$ and fix smooth collar neighborhoods of $C_i\subset D_i$ parametrized by $C_i \times [0,1]$. To extend to this collar, we can choose smooth isotopies of $a$ and $b$ from the maps in \eqref{eq ab} to the identity, and extend the action to 
$\cup_i C_i\times\{t\}$ by $a_t, b_t$, so that $a_0, b_0$ agree with rigid translations on $\cup_i C_i\times\{0\}$.  Then we can extend the action to the rest of $D_i$ by rigid translations.  The result is an action by diffeomorphisms of $a$ and $b$ on a disc, fixing the boundary pointwise, and permuting the centers of $D_1,D_2$ and $D_3$, which we take as three marked points. The reader may easily check that the mapping classes of $a$ and $b$ agree with the standard generators of $B_3$, as required.  
\end{proof}

Building on this line of argument, we also give a positive answer to Nariman's question mentioned at the end of Section \ref{sec:Cohomological}.

\begin{thm}  \label{thm:Nariman_q} 
Let $\mathbf{z} = \mathbf{z}_{2g+2}$ for some $g \geq 1$.  
There is a continuous homomorphism $\psi: \Diff(\D, \mathbf{z}) \to \Diff(\Sigma_{g}^{2}, \partial \Sigma_{g}^{2})$ that induces the geometric homomorphism $B_{2g+2} \to \Mod_{g}^{2}$ on mapping class groups.  
\end{thm}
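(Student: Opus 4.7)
The plan is to generalize the cone-off construction from the proof of Theorem \ref{thm:B_3-realize} using real-oriented blow-ups. Let $\hat\D$ denote the real-oriented blow-up of $\D$ at the $2g+2$ marked points (replacing each marked point by a boundary circle) and $\hat\Si$ the real-oriented blow-up of $\Si_g^2$ at the $2g+2$ branch points. The branched double cover $\Si_g^2\to\D$ restricts to a smooth unbranched double cover $\hat\Si\to\hat\D$ of the blow-ups. The tautological action of $\Diff(\D,\mathbf{z})$ on $\hat\D$ is smooth, acting on each new boundary circle through the derivative at the corresponding marked point. By covering-space theory, using the canonical lift fixing the two outer boundary components pointwise, this produces a continuous homomorphism $\hat\psi:\Diff(\D,\mathbf{z})\to\Diff(\hat\Si)$.

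Next I would cone off the new boundary circles of $\hat\Si$ to recover $\Si_g^2$. The closed surface $\Si_g^2$ is obtained from $\hat\Si$ by gluing $2g+2$ disks $D_q$ along the new boundary circles $\hat C_q$, so to construct $\psi$ I need to extend $\hat\psi(f)$ smoothly over each $D_q$. The action of $\hat\psi(f)$ on $\hat C_q$ factors through the derivative $Df_{p_i}$ at the marked point $p_i$ below $q$, lifted to the connected double cover of $\GL^+(2,\R)$ via the outer-boundary normalization. Following the $B_3$ construction, on a collar $\hat C_q\times[0,1]\subset D_q$ one interpolates this action to the trivial action (for instance by polar decomposition into $\SO(2)$ followed by a contracting path in the lifted rotation subgroup), then cones off to a fixed point at the center of $D_q$.

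The main obstacle is multiplicativity: ensuring the assignment $f\mapsto\psi(f)$ is a homomorphism rather than just a continuous set-theoretic section. Ad hoc interpolations (polar decomposition, exponential coordinates, etc.) are not multiplicative, so the naive cone-off fails. I would address this by fixing once and for all a universal model: a smooth closed disk $D$ together with a smooth action $\alpha$ of the connected double cover $\widetilde{\GL^+(2,\R)}$ fixing a point $c$ in the interior and restricting on a boundary collar to the canonical lift of the projectivized linear action on the unit tangent circle. The existence of such an $\alpha$ is the key technical claim and can be approached via the Iwasawa decomposition: the compact $\SO(2)$-factor acts on $D$ by (lifted) rotations fixing $c$, while the contractible solvable complement $AN$ admits a smooth action on $D$ agreeing with the linear action near $\pa D$ and fixing $c$, obtained by damping the standard linear action via a smooth cutoff. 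Given $\alpha$, the chain rule $D(fg)=Df\cdot Dg$ forces multiplicativity when $\psi(f)$ is defined on each $D_q$ by applying $\alpha$ to the lifted derivative at the corresponding marked point (using the permutation data of marked points to match source and target disks). Because $\psi(f)$ then differs from the canonical topological lift $\Psi(f)$ only inside the disks $D_q$, which deformation retract onto the branch points, the induced map on mapping class groups is the geometric homomorphism $B_{2g+2}\to\Mod_g^2$.
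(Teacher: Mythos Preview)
Your approach has a genuine gap: the universal model action $\alpha$ does not exist. First, the Iwasawa decomposition $G=KAN$ is only a diffeomorphism of underlying manifolds, not a product of groups (neither factor is normal), so specifying actions of $K$ and of $AN$ separately does not yield an action of $G$; the compatibility condition needed for $\alpha(k\cdot an)=\alpha_K(k)\circ\alpha_{AN}(an)$ to be multiplicative is not satisfied by a ``damped'' linear $AN$-action. More fundamentally, no such $\alpha$ can exist with an interior fixed point. Restrict to $\widetilde{\SL}_2\subset\widetilde{\GL^+(2,\R)}$ (positive scalars act trivially on $\hat C_q$). The nontrivial element $\sigma$ of the covering kernel acts on $\pa D$ as the deck transformation of the double cover of $S^1$, a rotation of order~$2$. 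The derivative $D\alpha_c:\widetilde{\SL}_2\to\GL(2,\R)$ is a Lie group homomorphism; since $\mathfrak{sl}_2$ is simple and embeds in $\mathfrak{gl}_2$ only as $\mathfrak{sl}_2$, the image is trivial or conjugate to $\SL(2,\R)$, and in either case $\sigma\in\ker(D\alpha_c)$. Then $\alpha(\sigma)$ is a finite-order diffeomorphism fixing $c$ with trivial derivative there, so by Bochner linearization $\alpha(\sigma)=\id$, contradicting its nontrivial boundary action. (Relatedly, the action on $\hat C_q$ is not determined by $Df_{p_i}$ alone: a Dehn twist about a small circle around $p_i$ has trivial derivative at $p_i$, yet its canonical lift acts on $\hat C_q$ as exactly this $\sigma$.)

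The paper sidesteps the obstruction rather than confronting it. Instead of coning off a nontrivial boundary action, it first builds a continuous endomorphism $\phi$ of $\Diff(\D,\mathbf z)$, inducing the identity on $B_{2g+2}$, whose image consists of diffeomorphisms acting by \emph{rigid translations} near $\mathbf z$ (hence with trivial derivative at each marked point). Concretely: blow up $\D$ at $\mathbf z$ to get $D_0$; for each $i$ blow up a fresh copy of $\D$ at $z_i$ alone to get $D_i$; glue each $D_i$ to $D_0$ along the blow-up circle of $z_i$; re-embed the resulting $(n+1)$-holed sphere in $\D$ so that the complementary regions are round discs centred at the $z_i$. The blown-up actions of $\Diff(\D,\mathbf z)$ on $D_0$ and on each $D_i$ agree along the glued circles, can be smoothed there by a single conjugacy, and extend over the complementary discs by translations. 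Since every $\phi(f)$ now has trivial $1$-jet at each $z_i$, the canonical topological lift $\Psi$ to the branched double cover is automatically smooth at the branch points, and $\Psi\circ\phi$ is the desired homomorphism. The moral is that the multiplicativity problem you correctly identify is solved not by finding a multiplicative cone-off, but by killing the derivative data so that no cone-off is needed.
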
 
While Nariman showed there is no cohmological obstruction to this lift, the existence of the realization is somewhat surprising, given a few of the results we have already mentioned.  Specifically, Theorem \ref{thm:st} implies that $\psi$ cannot be obtained by a map that factors through $B_{2g+2}$, and Theorem \ref{thm:Hurtado} (which has a variation for manifolds that are not closed) also implies that such a map $\psi$ should essentially be continuous, with further work of Hurtado implying that its restriction to the subgroup $\Diff_c(\D, \mathbf{z})$ of diffeomorphisms fixing a neighborhood of $\mathbf{z}$ (which we know to be nontrivial by \cite{ST}) must be obtained by embedding copies of covers of the open, punctured disc into $\Sigma_{g}^{2}$.  This suggests, at least vaguely, that $\psi$ would have to be obtained by branching the punctured disc over $\mathbf{z}$, an inherently non-smooth construction.    

The proof uses two constructions, one a trick for smooth gluing, and the other a classical ``blow up" procedure similar to the technique above to deal with singularities of translation surfaces.  

\begin{const}[Smoothing actions glued on a codimension 1 submanifold] \label{const:smoothing}
Let $G$ be a group acting by smooth diffeomorphisms on manifolds $S_1$ and $S_2$.   Let $X_1$ and $X_2$ be diffeomorphic connected components of $\partial S_1$ and $\partial S_2$ respectively, and let $S$ be the manifold obtained by gluing $S_1$ and $S_2$ by a diffeomorphism $X_1 \to X_2$.   If, for each $g \in G$,  the action of $g$ on $X_1$ agrees with that on $X_2$ under the identification used in the gluing, then there is an obvious induced action of $G$ on $S$ by homeomorphisms.  However, this is \emph{conjugate} to an action by \emph{smooth diffeomorphisms} on $S$.  
The conjugacy can be obtained by a map $f: S \to S$ which is the identity outside a tubular neighborhood of the glued boundary components, and in the tubular neighborhood (identified with $X \times [-1,1]$, with the glued boundary components at $X \times \{0\}$) is locally a very strong contraction at $0$; taken strong enough so that $f g f^{-1}$ becomes infinitely tangent to the identity in the direction transverse to the boundary at $X \times \{0\}$ .  Details are worked out in \cite{Parkhe} using the local contraction $(x, y) \mapsto (x, e^{\frac{-1}{e^{-1/|y|}}})$.  
\end{const}

\begin{const}[Blow-up] \label{const:blow_up}
Let ${\bf p} = \{p_1, p_2, ..., p_k \}$ be a finite set of points in a manifold $M^n$.  The (oriented) \emph{blowup of $M$ at ${\bf p}$ } is a smooth manifold $\hat M$ obtained from $M$ by replacing each $x\in{\bf p}$ with the space of directions $T^1_xM\simeq S^{n-1}$ in its tangent space. There is a natural projection $\Phi: \hat{M} \to M$ that is a diffeomorphism away from ${\bf p}$, and a natural injection $\Phi^*:\Diff^r(M,{\bf p})\ra\Diff^{r-1}(\hat M)$ for $r\ge1$. In particular, any $C^1$ action on $(M,{\bf p})$ induces an action on $\hat M$ by homeomorphisms. 
\end{const}

\begin{proof}[Proof of Theorem \ref{thm:Nariman_q}]
Our strategy is to first build a map $\phi: \Diff(\D, \mathbf{z}) \to \Diff(\D, \mathbf{z})$, which has image in a subgroup that acts on a given neighborhood of $\mathbf{z}$ by translations, and also induces the identity map $B_n \to B_n$. 
Building this map is the bulk of the construction.  Given such a map $\phi$, the diffeomorphisms in its image can then be lifted to \emph{diffeomorphisms} of a cover branched over $\mathbf{z}$ as described above.  

To start, apply Construction \ref{const:blow_up} to blow up $\D$ at the set $\mathbf{z}$.  The new surface obtained (call it $D_0$) has $n+1$ boundary components, one corresponding to the original boundary $\partial$ of the disc, and the others corresponding to the blown up points.  

Enumerate $\mathbf{z} = \{z_1, z_2, ..., z_n\}$ and for $i = 1, 2, ..., n$, let $D_i$ be a blow-up of $\D$ at $\{z_i\}$.   Glue each $D_i$ to $D_0$ along the blow-up of $z_i$, using the identity map on the space of tangent directions at $z_i$.   The result is an $(n+1)$-holed sphere.  Now embed this $(n+1)$-holed sphere into $\D$ with the boundary component $\partial$  mapping to $\partial \D$.   The result is pictured in Figure \ref{gluing fig}; boundary components of the $D_i$ are labeled by their images under the map from the blow-up construction.  

  \begin{figure*}
   \labellist 
  \small\hair 2pt
   \pinlabel $z_1$ at 222 118
   \pinlabel $z_2$ at 315 120
   \pinlabel $z_3$ at 405 118
   \pinlabel $D_0$ at 270 250
   \pinlabel $\partial$ at 380 55
     \pinlabel $D_1$  at 0 220  
     \pinlabel $D_2$ at 272 140
     \pinlabel $D_3$ at 375 138
   \pinlabel $z_1$  at 35 113
   \pinlabel $\partial$ at 80 150
    \pinlabel $z_2$ at 25 189
   \pinlabel $z_3$ at 75 200
   \endlabellist
     \centerline{ \mbox{
 \includegraphics[width = 2.4in]{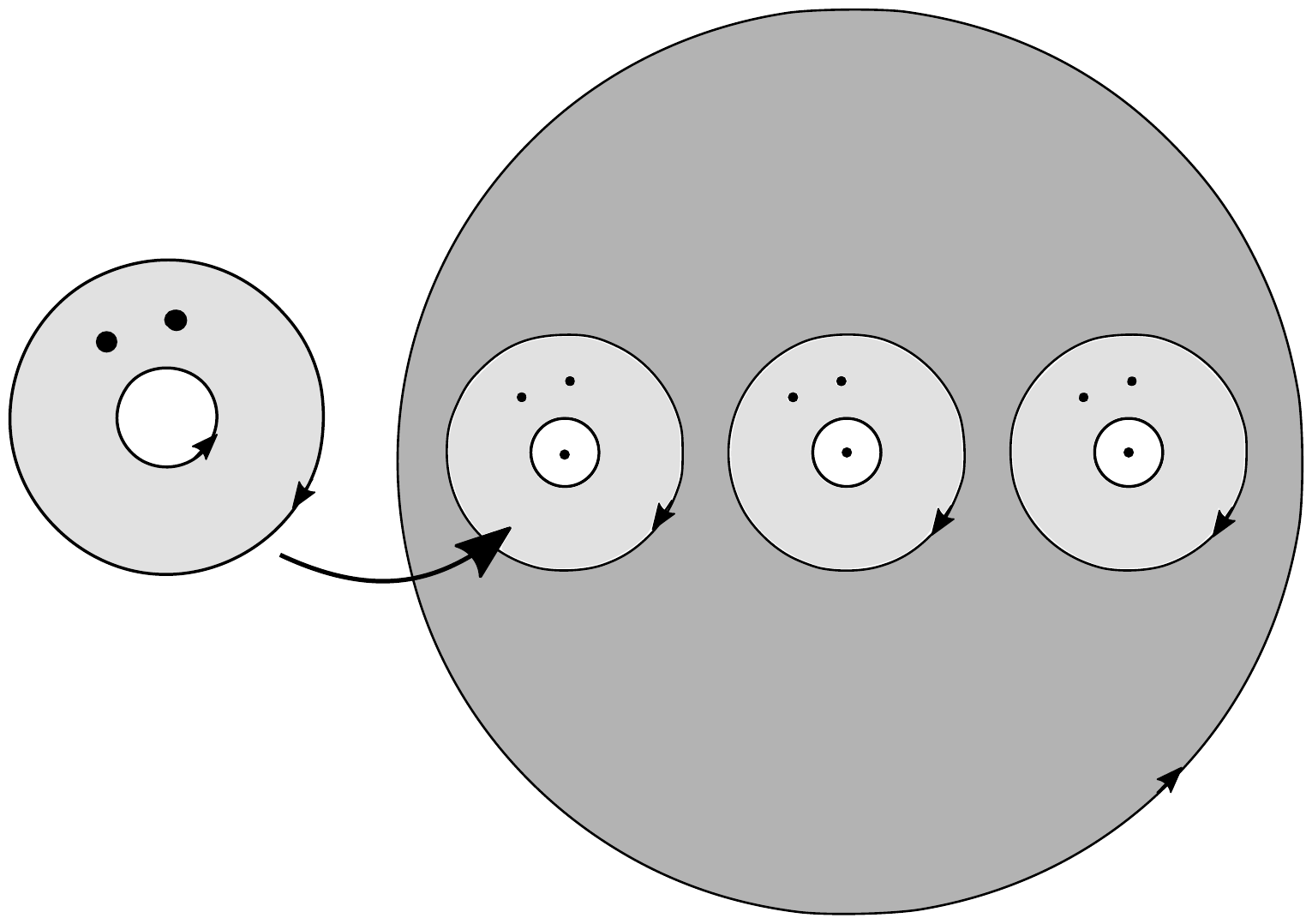}}}
 \caption{Gluing copies of $\D$ blown up at one point of $\mathbf{z}$ into a copy of $\D$ blown up at $\mathbf{z}$}
  \label{gluing fig}
  \end{figure*}

Let $C_1, C_2, ... C_n$ be the connected components of the complement of the image of the embedding (shown in white on the figure).  We may arrange the embedding so that each $C_i$ is a round disc of radius $\epsilon$, centered around the marked point $z_i$ on the original disc $\D$ (shown as midpoints of the white regions of the figure).  Construction \ref{const:blow_up} gives a natural homomorphism from $\Diff(\D, \mathbf{z})$ to $\Diff(D_i)$.  These actions of $\Diff(\D, \mathbf{z})$ on the various $D_i$ for $i =0, 1, ..., n$ agree on their glued boundary components, so Construction \ref{const:smoothing} produces a homomorphism from $\Diff(\D, \mathbf{z})$ to the diffeomorphisms of the $n+1$ holed sphere that was obtained by gluing the $D_i$ together.  We identify this surface with the image of its embedding in $\D$.    Since elements of $\Diff(\D, \mathbf{z})$ fix a neighborhood of $\partial \D$ pointwise, we may also arrange the embedding so that this action permutes the boundaries of the complementary discs $C_i$ by \emph{rigid translations}.  Thus, the action naturally extends to an action on $\D$ by diffeomorphisms, permuting the discs $C_i$ by translations.  In particular, the set of midpoints of the $C_i$ is preserved, so this action is by elements of $\Diff(\D, \mathbf{z})$.   Let $\phi: \Diff(\D, \mathbf{z}) \to \Diff(\D, \mathbf{z})$ denote this action.  

Finally we check that $\phi$ induces the identity map on the quotient $B_n = \Diff(\D, \mathbf{z})/\Diff_0(\D, \mathbf{z})$. By design of our blow-up, gluing, and embedding, if $f(z_i) = z_j$, then $\phi(f)$ maps $D_i$ to $D_j$, hence maps $C_i$ to $C_j$, and its center point $z_i$ to $z_j$. 

We may now quickly finish the proof.  As explained in \S 2, there is an injective map $\Psi: \Diff(\D, \mathbf{z}) \to \Homeo_\pa(\Sigma_{g}^{2})$.  Consider the map  $\Psi \circ \phi: \Diff(\D, \mathbf{z}) \to \Homeo_\pa(\Sigma_{g}^{2})$, which agrees with $\Psi$ on mapping class groups.  Each diffeomorphism in the image of $\phi$ has trivial (i.e. constant $\equiv id$) derivative in a neighborhood of each $z \in \mathbf{z}$, so its image under $\Psi$ is smooth everywhere.   Thus, $\Psi \circ \phi$ gives the desired map $\Diff(\D, \mathbf{z}) \to \Diff_\pa(\Sigma_{g}^{2})$.   
\end{proof}


\subsection{Realizing free abelian and right-angled Artin groups}\label{sec:raag}

Recall from \S\ref{sec:Franks-Handel} the Nielsen--Thurston classification of elements of $\Mod(\Si)$ (finite order, reducible, pseudo-Anosov).  We now discuss reducible elements in more detail.  By definition, a reducible $\phi \in \Mod(\Si)$ admits a \emph{reduction system}, a finite collection $\ca A$ of disjoint isotopy classes of simple closed curves that is invariant under $\phi$.  The intersection of all {\em maximal} reduction systems is called the \emph{canonical reduction system} $\ca A_\phi$.   

Birman--Lubotzky--McCarthy \cite{blm} use reduction systems to show that all solvable subgroups of $\Mod(\Si_{g,n}^b)$ are virtually abelian, and give a (sharp) bound on the torsion-free rank of abelian subgroups.   
Farb \cite[Ch.\ 2, \S6.3]{farb-problist} remarks (without proof) that their perspective can be used to show that all free abelian subgroups $\Z^n<\Mod_{g,m}^b$ can be realized by homeomorphisms.    In essence, since the image of a reduction system $\ca A$ for $\phi$ under some element $\psi$ is a reduction system for $\psi \phi \psi^{-1}$,  whenever $\Gamma < \Mod(\Si_{g,n}^b)$ is abelian, then the union $\bigcup_{\phi \in \Gamma} \ca A_\phi$ is a reduction system for each element of $\Gamma$, and restrictions of commuting elements to subsurfaces are necessarily either powers of the same pseudo-Anosov, or of the same Dehn twist, or possibly the identity.  From there (proceeding with some care on annular neighborhoods of the reducing system) one can choose commuting representative homeomorphisms for elements of $\Gamma$.

A natural next step is to generalize this strategy to \emph{right-angled Artin subgroups} of $\Mod(\Si)$. 
A right-angled Artin group $A(G)$ is defined by a finite undirected graph $G = (V, E)$ via 
\[A(G)=\langle x\in V \mid [x,y]=1\text{ if }\{x,y\}\in E\rangle.\] 
Hence, free groups (which are always realizable) correspond to totally disconnected graphs, and free abelian groups to complete graphs.  
There are many other interesting RAAG subgroups of mapping class groups, see \cite[\S8]{koberda} and references therein.  

\begin{qu}
Is every right-angled Artin subgroup $A(G)<\Mod(\Si)$ realized by homeomorphisms? 
\end{qu}

We remark that the reduction-system strategy above seems to go through in some special cases; for instance, if $A(G)<\Mod(\Si)$ is such that each $x\in V$ {\em fixes} each curve in $\ca A_x$.  However, there are many RAAG subgroups that do not have this property.

\bibliographystyle{alpha}
\small
\bibliography{biblio}

\end{document}